\numberwithin{equation}{section}
\numberwithin{figure}{section}
\newtheorem{theorem}{Theorem}[section]
\newtheorem{lemma}[theorem]{Lemma}
\newtheorem{proposition}[theorem]{Proposition}
\theoremstyle{definition}
\newtheorem{remark}[theorem]{Remark}
\renewenvironment{proof}[1][\proofname]{%
   \par\pushQED{\qed}\normalfont%
   \topsep6\p@\@plus6\p@\relax
   \trivlist\item[\hskip\labelsep\bfseries#1\@addpunct{.}]%
   \ignorespaces
}{%
   \popQED\endtrivlist\@endpefalse
}
\newcommand{\la}{\langle}
\newcommand{\ra}{\rangle}
\newcommand{\ls}{\lesssim}
\newcommand{\pa}{\partial}
\newcommand{\ep}{\epsilon}
\newcommand{\R}{\mathbb{R}}
\begin{document}

\title[KdV limit for the Boussinesq equation]{On the Korteweg-de Vries limit for the Boussinesq equation}

\linespread{1.2}

\author[Y. Hong]{Younghun Hong}
\address{Department of Mathematics, Chung-Ang University, Seoul 06974, Korea}
\email{yhhong@cau.ac.kr}
 
 \author[C. Yang]{Changhun Yang}
\address{Department of Mathematics, Chungbuk National University, Cheongju-si 28644, Chungcheongbuk-do, Korea}
\email{chyang@chungbuk.ac.kr}

\begin{abstract}
The Korteweg-de Vries (KdV) equation is known as a universal equation describing various long waves in dispersive systems. In this article, we prove that in a certain scaling regime, a large class of rough solutions to the Boussinesq equation are approximated by the sums of two counter-propagating waves solving the KdV equations. It extends the earlier result \cite{Schneider1998} to slightly more regular than $L^2$-solutions. Our proof is based on robust Fourier analysis methods developed for the low regularity theory of nonlinear dispersive equations. 
\end{abstract}

\maketitle

\section{Introduction}

The Korteweg-de Vries (KdV) equation has been introduced at first by Boussineq \cite{B1872} and Korteweg and de Vries \cite{KdV1895} as a model equation describing propagation of shallow water surfaces along a channel. However, it has played much more significant roles in vast areas in pure and applied mathematics, including the discovery of integrable systems and the development of the inverse scattering method. As for analysis of PDEs, the KdV equation has drawn huge attention in spite of its simplicity, because the derivative nonlinearity makes proving well-posedness challenging. There is a long history of the low regularity well-posedness theory for the KdV equation, which includes the celebrated work of Bourgain \cite{B-1993KdV}, Kenig-Ponce-Vega \cite{KPV-1991,KPV-1993DUKE, KPV-1993CPAM, KPV1996} and Colliander-Keel-Staffilani-Takaoka-Tao \cite{CKSTT-2003}, where general Fourier analysis methods have been developed for dispersive equations. Relatively recently, the optimal well-posedness is established in Kappeler and Topalov \cite{KT2006} and Killip and Visan \cite{KV-2019}, invoking the complete integrability of the equation.

On the other hand, in mathematical modeling, the KdV equation is thought to be a \textit{universal} modulation equation for slowly varying long waves. It can be derived rigorously from various dispersive equations such as the water wave problem \cite{Cra1985,SW2000,SW2022,Du2012}, the Vlasov(-type) equations in plasma physics \cite{HanKwan2013, GP2014}, and the Fermi-Pasta-Ulam (FPU) system \cite{ZK-1965,SW-2000,HKY2021}. For more details, we refer to Schneider-Uecker \cite[Chapter 12]{SU-2017}.

In this article, we are particularly interested in rigorous derivation of the KdV equation from the ``good" Boussinesq equation
\begin{equation}\label{Boussinesq}
   \pa_t^2 u - \pa_x^2u +\pa_x^4 u +\pa_x^2( u^2)=0,
  \end{equation}
where $u(t,x):\R\times\R\rightarrow \R$. The equation \eqref{Boussinesq} is one of simplest models for dispersive fluids, and it can be derived from water wave problem modeling a two dimensional irrotational flow of an inviscid incompressible fluid in an infinitely long canal of fixed finite depth \cite{B1872}.  It is proved by Linares \cite{Linares1993} that the Boussinesq equation is globally well-posed for small data in the energy space, and its regular solutions preserves the energy defined by  
\begin{align}\label{energy}
   E(u(t))=\frac{1}{2}\|\partial_x^{-1}\pa_t u(t)\|_{L^2(\R)}^2+\frac{1}{2}\|\partial_x u(t)\|_{L^2(\R)}^2+\frac{1}{2}\| u(t)\|_{L^2(\R)}^2-\frac{1}{3}\|u(t)\|_{L^3(\R)}^3.
\end{align}
Moreover, by the Fourier analysis methods, the local well-posedness in a low regularity space is established by Farah \cite{Fa2009} and Kishimoto \cite{No2013}.

It is proved in Schneider \cite{Schneider1998} that if slowly modulating solutions to the Boussinesq equation are sufficiently regular (see Remark \ref{main theory remarks}), then they can be approximated by counter-propagating KdV flows. More precisely, inserting the long wave ansatz
\begin{equation}\label{KdV ansatz}
   u(t,x)=\ep^2 w_+(\ep(x-t),\ep^3t) + \ep^2 w_-(\ep(x+t),\ep^3t) 
\end{equation}
for a small parameter $0<\ep\ll1$ into \eqref{Boussinesq} and equating the coefficients at $\ep^6$-order terms to zero yields the system of two decoupled KdV equations
\begin{align}\label{KdV}
   2\pa_t w_\pm \mp \pa_x ^3 w_\pm \mp \pa_x (w_\pm ^2)=0, 
\end{align}
where $w_\pm(t,x):\R\times \R\rightarrow\R$. The approximation is valid on the time scale of order $\mathcal O(1/\ep^3)$.

Our main result extends this earlier result to a larger class of low regularity solutions.
\begin{theorem}[Korteweg-de Vries limit for the Boussinesq equation]\label{main theorem}
Let $\ep\in(0,1)$ and $0<s\le 5$. Suppose that
$$\sup_{\epsilon\in(0,1]}\|u_{\epsilon,0}^{\pm}\|_{H^s(\R)} \le R,$$
and let $u_\epsilon(t)\in C_t(\mathbb{R};H^s(\R))$ be the solution to the Boussinesq equation \eqref{Boussinesq} with initial data
$$\big(u_\epsilon(0,x),\partial_tu_{\epsilon}(0,x)\big)=\epsilon^2\Big((u_{\epsilon,0}^++u_{\epsilon,0}^-)(\epsilon x), \partial_x\sqrt{1-\partial_x^2}\big((u_{\epsilon,0}^--u_{\epsilon,0}^+)(\epsilon x)\big)\Big).$$
Then, there exists $T(R)>0$ such that 
\begin{equation}\label{eq: convergence}
\sup_{{|t|\leq\frac{T}{\epsilon^3}}}\big\| u_\ep(t,x)- \epsilon^2w_\epsilon^+\big(\epsilon^3t, \epsilon(x-t)\big) -  \epsilon^2w_\epsilon^-\big(\epsilon^3t, \epsilon(x+t)\big) \big\|_{L^2(\R)} \lesssim_{R}\ep^{\frac{3}{2}+\min\{\frac25s,\frac12\}},
\end{equation}
where $w_\epsilon^{\pm}\in C_t(\mathbb{R};H^s(\R))$ is the solution to the KdV equation \eqref{KdV} with the same initial data $u_{\epsilon,0}^{\pm}$.
\end{theorem}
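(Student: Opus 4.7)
The plan is to factor the Boussinesq equation into a first-order system for the right- and left-moving waves, pass to long-wave variables, and then estimate the deviation from the two decoupled KdV profiles in a Bourgain space adapted to KdV. Set $L:=\partial_x\sqrt{1-\partial_x^2}$, the skew-adjoint operator with Fourier symbol $i\xi\sqrt{1+\xi^2}$, so that the linear part of \eqref{Boussinesq} factors as $\partial_t^2-L^2$. Choosing $v_\pm:=\tfrac12(u\mp L^{-1}\partial_t u)$ converts \eqref{Boussinesq} into the first-order system $(\partial_t\pm L)v_\pm=\pm\tfrac12 L^{-1}\partial_x^2\bigl((v_++v_-)^2\bigr)$ with data $v_\pm(0,x)=\ep^2 u^\pm_{\ep,0}(\ep x)$, as follows from $L^{-1}\partial_t u_\ep(0,\cdot)=\ep^2(u^-_{\ep,0}-u^+_{\ep,0})(\ep\cdot)$ and the hypothesis; the multiplier $L^{-1}\partial_x^2$ has the bounded smooth symbol $i\xi/\sqrt{1+\xi^2}$, which is of KdV type at low frequencies. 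Rescale via $V_\pm(T,X):=\ep^{-2}v_\pm(T/\ep^3,X/\ep\pm T/\ep^3)$, so that $V_\pm(0,X)=u^\pm_{\ep,0}(X)$; using the symbol $i\eta\sqrt{1+\ep^2\eta^2}$ of the rescaled dispersion, the system becomes the perturbed KdV pair
\begin{equation*}
 2\partial_TV_\pm\mp\partial_X^3V_\pm\mp\partial_X(V_\pm^2)=\ep^2\mathcal{L}_\pm V_\pm+\mathcal{N}_\pm^{\mathrm{cr}}(V_+,V_-),
\end{equation*}
where $\mathcal{L}_\pm$ is a Fourier multiplier of symbol $O(\eta^5)$ on $|\eta|\lesssim\ep^{-1}$ encoding the Taylor remainders of $\sqrt{1+\ep^2\eta^2}$ and $(1-\ep^2\partial_X^2)^{-1/2}$, and $\mathcal{N}_\pm^{\mathrm{cr}}$ collects the cross-frame terms $V_+V_-$ and $V_\mp^2$, which, when read in the comoving frame of $V_\pm$, carry the rapid phase $e^{\pm 2iT\eta/\ep^2}$.

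Let $R_\pm:=V_\pm-w_\pm$, where $w_\pm$ is the true KdV solution with initial data $u^\pm_{\ep,0}$. Subtraction yields a perturbed KdV equation for $R_\pm$ with self-interactions $\partial_X(w_\pm R_\pm)$ and $\partial_X(R_\pm^2)$ on the left and the sources $\ep^2\mathcal{L}_\pm V_\pm$ and $\mathcal{N}_\pm^{\mathrm{cr}}$ on the right. I would estimate $R_\pm$ in a KdV-adapted Bourgain space $X^{s,b}$ on a short rescaled interval, using the classical Bourgain / Kenig--Ponce--Vega bilinear estimate to absorb the self-interactions into a contraction. For the linear source $\ep^2\mathcal{L}_\pm V_\pm$, a Littlewood--Paley split at a threshold $N$ bounds the low-frequency part by $\ep^2 N^{5-s}\|V_\pm\|_{H^s}$ and the high-frequency tail by $N^{-s}\|V_\pm\|_{H^s}$; optimizing in $N$ produces the gain $\ep^{2s/5}$. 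For the non-resonant source $\mathcal{N}_\pm^{\mathrm{cr}}$, the rapid phase $e^{\pm 2iT\eta/\ep^2}$ is exploited through a normal-form integration by parts in $T$ (equivalently, by using that modulations off the KdV characteristic are bounded below by $\ep^{-2}|\eta|$), trading $\ep^2$ for one $X$-derivative and saturating at the dispersive rate $\ep^{1/2}$. A Gronwall/bootstrap iterates this short-time bound over $[0,T(R)]$ in rescaled time using the global $H^s$-theory for KdV and persistence of regularity for Boussinesq; undoing the change of variables multiplies $L^2_x$-norms by $\ep^{3/2}$, yielding \eqref{eq: convergence}.

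The technical heart of the argument is the construction of quantitative bilinear/multilinear estimates in $X^{s,b}$ which simultaneously produce the sharp $\ep^2$-gain from the non-resonant cross interactions and degrade only by a factor $N^{5-s}$ from the dispersive correction, all while preserving the low-regularity KdV contraction; the precise competition between the KdV resonance function $\Phi(\xi_1,\xi_2)=3\xi_1\xi_2(\xi_1+\xi_2)$ and the fast translation phase $\ep^{-2}|\xi|$ in the size of the $\tau-\xi^3$ modulation is what dictates the exponent $\min(2s/5,1/2)$. By contrast, the extension from short rescaled times to the full interval $[0,T(R)/\ep^3]$ in original time is essentially routine, relying on the global $H^s$-theory for KdV and on the Boussinesq energy \eqref{energy} for persistence of $V_\pm$.
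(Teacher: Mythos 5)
Your reduction to a first-order system for $v_\pm=\tfrac12(u\mp L^{-1}\partial_t u)$, the long-wave rescaling, the threshold $N$ obtained by balancing $\ep^2N^{5-s}$ against $N^{-s}$ (giving $N\sim\ep^{-2/5}$ and the rate $\ep^{2s/5}$), and the non-resonance mechanism for the cross-frame terms all match the paper's architecture, and the claimed saturation at $\ep^{1/2}$ is also correct. The genuine gap is in the step where you propose to ``estimate $R_\pm=V_\pm-w_\pm$ in a KdV-adapted Bourgain space $X^{s,b}$.'' The rescaled Boussinesq dispersion $s_\ep(\xi)=\ep^{-2}\xi(\langle\ep\xi\rangle-1)$ satisfies $s_\ep(\xi)\approx\ep^{-1}|\xi|\xi$ for $|\xi|\gg\ep^{-1}$, so $s(\xi)-s_\ep(\xi)\approx\tfrac12\xi^3$ there: the modulation weights $\langle\tau\pm s(\xi)\rangle^b$ and $\langle\tau\pm s_\ep(\xi)\rangle^b$ are not comparable at high frequency, and the free Boussinesq evolution of an $H^s$ datum has a KdV-Bourgain norm that diverges as $\ep\to0$ unless $s\geq 3b>\tfrac32$ (this is exactly Remark \ref{high frequency worry}). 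Equivalently, your multiplier $\mathcal{L}_\pm$ is $O(\eta^5)$ only on $|\eta|\lesssim\ep^{-1}$; above that the ``correction'' $\ep^2\mathcal{L}_\pm$ is of the same order as $\partial_X^3$ and cannot be treated perturbatively, and your high-frequency tail bound $N^{-s}\|V_\pm\|_{H^s}$ controls $P_{>N}V_\pm$ in $L^2$, not $\ep^2\mathcal{L}_\pm P_{>N}V_\pm$ in the dual Bourgain norm. For small $s$ --- which is the whole point of the theorem --- the single-space contraction therefore does not close.

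The paper's resolution, which is its main new device and which your outline is missing, is to interpolate a third object: the solution $v_\ep^\pm$ of the frequency-localized, decoupled Boussinesq equation \eqref{LFB}, whose Fourier support lies in $|\xi|\le N$, where the Boussinesq and Airy restriction norms are equivalent (Lemma \ref{Lem:frequency loc norm}). One then estimates $u_\ep^\pm-v_\ep^\pm$ entirely in the Boussinesq norm $X_{\ep,\pm}^{0,b}$ (the high-frequency linear data contributes $N^{-s}=\ep^{2s/5}$, the coupling terms contribute $\ep^{\min\{s,1/2\}}$ via shifted-modulation bilinear estimates) and $v_\ep^\pm-w_\ep^\pm$ entirely in the Airy norm $X_\pm^{0,b}$ (symbol convergence at low frequency). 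Two smaller points: first, even the self-interaction step requires $\ep$-uniform bilinear estimates adapted to $s_\ep$ (Lemmas \ref{Lem:BL1}--\ref{Lem:BL3}); the classical Kenig--Ponce--Vega estimate for the Airy flow does not apply verbatim to an equation whose dispersion is not Airy at high frequency. Second, no Gronwall iteration over the rescaled interval is needed or available: the interval $|t|\le T/\ep^3$ is a single fixed short time in rescaled variables, and extending it is precisely the open issue of Remark \ref{main theory remarks}.
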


\begin{remark}[{Reduced regularity}]\label{main theory remarks}
$(i)$ Compared to \cite[Theorem 1]{Schneider1998} and \cite[Theorem 12.1.1 and 12.4.1]{SU-2017}, the regularity requirement for the KdV limit  \eqref{eq: convergence} is reduced down to $s>0$ from $s>\frac{9}{2}$, as well as a weighted norm bound $\|(1+|x|^2)u_{\epsilon,0}^{\pm}\|_{L^2}$ on initial data is removed for the counter-propagating wave case.\\
$(ii)$ Lowering regularity and removing a weighed norm bound condition for the KdV limit might not be just a purely technical question, but it can also be used to extend the interval of validity. Indeed, in Theorem \ref{main theorem}, the time interval 
$[-\frac{T}{\epsilon^3},\frac{T}{\epsilon^3}]$ is given in terms of ``sufficiently small" $T>0$. However, if one could prove such a convergence for energy class solutions, there would be a hope to invoke the conservation law to show that $\|...(t)\|_{L^2}$ in \eqref{eq: convergence} is bounded by $\sim e^{c(1+|t|)^m}$ or $\sim (1+|t|)^m$ for ``any" $t\in\mathbb{R}$. Then, the interval of validity would be extended logarithmically or polynomially in $\frac{1}{\epsilon}$. \\
$(iii)$ For fixed $\epsilon>0$, the conservation law of the Boussinesq equation \eqref{energy} controls the $H^1$-norms of solutions. Nevertheless, by scaling, the conservation law loses its control on $\|\partial_x u_\epsilon(t)\|_{L^2}^2$ in the limit $\epsilon\to0$. It merely provides a bound for the $L^2$-norm (see \eqref{rescaled energy}). For this reason, it is desirable to obtain \eqref{eq: convergence} for $L^2$(or $L^2$ type)-solutions, which will be postponed in our future work.
\end{remark}

\begin{remark}[Rate of convergence]
$(i)$ In \eqref{eq: convergence}, the $O(\epsilon^{\frac{3}{2}})$-factor is simply from the long wave scaling \eqref{KdV ansatz}. The additional $O(\epsilon^{\frac{2}{5}s})$-factor is the essential rate of convergence.\\
$(ii)$ When $0\leq s\leq \frac{5}{4}$, the additional $O(\epsilon^{\frac{2}{5}s})$-factor in \eqref{eq: convergence} is optimal in the sense that the difference between the corresponding two flows is bounded by $O(\epsilon^{\frac{2}{5}s})$ (see Lemma \ref{Lem:difference linear sol in Xsb}).\\
$(iii)$ Since the $O(\epsilon^{\frac{7}{2}})$-bound is obtained for localized regular solutions \cite{Schneider1998}, one may expect that the bound \eqref{eq: convergence} can be improved to $O(\ep^{\frac{3}{2}+\min\{\frac25s,2\}})$. However, we currently do not know how to prove it. Indeed, such a loss is from the bilinear estimates (Lemma \ref{Lem:BL2} and \ref{Lem:BL3}) which measure interactions between the counter-propagating waves. There is also a possibility that putting a weighted norm bound (see Remark \ref{main theory remarks} $(i)$) might be necessary for a better rate of convergence, because localization reduces interactions of two waves.
\end{remark}

In the previous work \cite{Schneider1998}, the KdV limit is justified by multi-scaling analysis, involving energy estimates for the residual $\textup{Res}(u) = -\pa_t^2 u + \pa_x^2u -\pa_x^4 u -\pa_x^2( u^2)$, where $u$ is the ansatz \eqref{KdV ansatz}, but this approach requires high Sobolev norm bounds on $w_{\pm}$. On the other hand, the proof of our main result is based on the well-established low regularity well-posedness theory, involving Fourier analysis methods, for nonlinear dispersive equations. A key observation is that reformulating the problem as a system of integral equations (see \eqref{Coupled Boussinesq}), or the Duhamel representation, enables us to capture dispersive effects properly. Indeed, the linear evolutions in the reformulated equation \eqref{Coupled Boussinesq} are given by $e^{\mp \frac{t}{\ep^2}\pa_x(\sqrt{1-\epsilon\Delta}-1)}$. Therefore, developing the linear and bilinear estimates associated with these linear flows, one can deduce uniform bounds for low regularity nonlinear solutions.

By similar approaches, regularity requirements have been reduced for limit problems in recent work of the authors and their collaborators; \cite{HY-2019SIAM,HKNY2021,HKY2023} for continuum limit for discrete nonlinear Schr\"odinger equations, and \cite{HKY2021} for the KdV limit for the Fermi-Pasta-Ulam (FPU) system. We note that in some simpler cases \cite{HY-2019SIAM,HKNY2021,HKY2023}, the interval of validity is extended arbitrarily by combining the energy conservation law.

For Theorem \ref{main theorem}, we mainly follow the strategy in Hong-Kwak-Yang \cite{HKY2021}, where $H^s$-solutions to the KdV equation are derived from the FPU system, provided that $s>\frac{3}{4}$. In \cite{HKY2021}, the obstacle to go below $\frac{3}{4}$ is from that the Fourier restriction norm cannot be directly employed to measure the difference of two different equations (see \cite[Appendix A]{HKY2021}). The same difficulty also arises for the limit from the Boussinesq model. In this paper, a new idea is introduced to overcome this problem employing an auxiliary equation, namely, the frequency localized decoupled Boussinesq equation (see \eqref{LFB} below). It allows us to reduce the regularity requirement up to $s>0$, and we expect that the same idea can be applied to other models. More detailed outline of the proof will be given in the next section.

\subsection{Notations}
Throughout this article, $\epsilon\in (0,1]$ is a small parameter which will be sent to zero. A very important remark is that we always denote
$$A\lesssim B\quad\textup{(resp., }A\gtrsim B,\ A\sim B)$$
if there exists $c>0$, ``independent of $\epsilon\in (0,1]$", such that $A\leq c B$ (resp., $A\geq c B$, $\frac{1}{c}A\leq B\leq cA$). The standard Japanese bracket is expressed by $\langle x\rangle:=\sqrt{1+x^2}$. Let $\eta$ be a fixed smooth bump function $\eta\in C_c^\infty(\R)$ such that 
\begin{equation}\label{bump function}
0\leq \eta\leq 1,\quad \eta\equiv 1\textup{ on }[-1,1],\quad\textup{supp}\eta\subset[-2,2],
\end{equation}
and we define the ``sharp'' frequency truncation operator $P_N$ by
\begin{equation}\label{frequency truncation operator}
\widehat{ P_{\le N}u} (\xi) = \mathbf{1}_{\{|\xi|\le N\}}\hat{u}(\xi)\quad\textup{with}\quad N=\frac12\ep^{-\frac25}.
\end{equation}

\subsection{Acknowledgement}
The first author was supported by the National Research Foundation of Korea(NRF) grant funded by the Korea government(MSIT) (No. RS-2023-00208824 and No. RS-2023-00219980). The second author was supported by the National Research Foundation of Korea(NRF) grant funded by the Korea government(MSIT) (No. 2021R1C1C1005700)

\section{Outline of Proof}

In this section, we present the outline of the proof of our main result. First, in Section \ref{subsec: Reformulation of Boussinesq equation}, we reformulate the original  Boussinesq equation \eqref{Boussinesq} as a coupled system of nonlinear integral equations \eqref{Coupled Boussinesq} in a way that the Fourier analysis method can be carried out for our limit problem. Then, the main theorem (Theorem \ref{main theorem}) is rephrased accordingly (see Theorem \ref{main theorem'}). In Section \ref{subsec: Frequency localized decoupled Boussinesq equation}, we introduce an auxiliary equation, namely, the frequency localized decoupled Boussinesq equation \eqref{LFB}. It is a key tool to overcome a technical difficulty arising in application of the Fourier restriction norm to two different equations. Using this equation, we give a sketch of how we prove the theorem.

\subsection{Reformulation of the Boussinesq equation}\label{subsec: Reformulation of Boussinesq equation}

For a small parameter $\ep\in(0,1]$, we define
\begin{equation}\label{rescaled u}
u_\ep(t,x):=\frac{1}{\ep^2}u\bigg(\frac{t}{\ep^3}, \frac{x}{\ep}\bigg),
\end{equation}
where $u(t,x)$ is a solution to the Boussinesq equation \eqref{Boussinesq}. Then, $u_\ep(t,x)$ obeys the rescaled equation 
\begin{equation}\label{scaledBoussinesq}
\epsilon^4\pa_t^2 u_\ep= \pa_x^2(1-\epsilon^2\pa_x^2)u_\ep -\ep^2\pa_x^2(u_\ep^2),
\end{equation}
and the conservation law \eqref{energy} is also rescaled to 
\begin{equation}\label{rescaled energy}
E_\epsilon(u_\epsilon)=\frac{\ep^4}{2}\big\|\partial_x^{-1}\pa_t u_\epsilon\big\|_{L^2}^2+ \frac{\ep^2}{2}\|\partial_xu_\epsilon\|_{L^2}^2+\frac{1}{2}\|u_\epsilon\|_{L^2}^2-\frac{\ep^2}{3}\|u_\epsilon\|_{L^3}^3.
\end{equation}
By Duhamel's principle, the equation \eqref{scaledBoussinesq} is equivalent to the integral equation
\begin{align*}
u_\ep(t)
= \cos &\bigg( \frac{t}{\ep^2}|\partial_x|\langle\epsilon\partial_x\rangle\bigg)u_\epsilon(0) +\ep^2 \frac{\sin\left(\frac{t}{\ep^2}|\partial_x|\langle\epsilon\partial_x\rangle\right)}{|\partial_x|\langle\epsilon\partial_x\rangle}\partial_tu_\epsilon(0) \\
&+\int_0^t \sin\bigg(\frac{t-t_1}{\ep^2}|\partial_x|\langle\epsilon\partial_x\rangle\bigg)\frac{|\partial_x|}{\langle\epsilon\partial_x\rangle}(u_\ep(t_1))^2 dt_1,
\end{align*}
where $|\partial_x|$ and $\langle\epsilon\partial_x\rangle$ are the Fourier multipliers with symbol $|\xi|$ and $\langle\epsilon\xi\rangle$ respectively. By the decompositions 
\begin{align*}
\cos\bigg(\frac{t}{\ep^2}|\partial_x|\langle\epsilon\partial_x\rangle\bigg)&
=\frac12 e^{\frac{t}{\ep^2}\pa_x\langle\epsilon\partial_x\rangle}+\frac12 e^{-\frac{t}{\ep^2}\pa_x\langle\epsilon\partial_x\rangle},   \\
\sin\bigg(\frac{t}{\ep^2}|\partial_x|\langle\epsilon\partial_x\rangle\bigg)|\partial_x|&
=-\frac{1}{2}e^{\frac{t}{\ep^2}\pa_x\langle\epsilon\partial_x\rangle}\pa_x+\frac{1}{2}e^{-\frac{t}{\ep^2}\pa_x\langle\epsilon\partial_x\rangle}\pa_x, 
\end{align*}
the right hand side of the equation can be reorganized as 
$$\begin{aligned}
u_\ep(t)&=\frac12e^{\frac{t}{\ep^2}\pa_x\langle\epsilon\partial_x\rangle}u_\epsilon(0)+\frac12 e^{-\frac{t}{\ep^2}\pa_x\langle\epsilon\partial_x\rangle}u_\epsilon(0)\\
&\quad+\frac{1}{2} e^{\frac{t}{\ep^2}\pa_x\langle\epsilon\partial_x\rangle}\frac{\ep^2}{\langle\epsilon\partial_x\rangle}\partial_x^{-1}\partial_tu_\epsilon(0) 
-\frac{1}{2} e^{-\frac{t}{\ep^2}\pa_x\langle\epsilon\partial_x\rangle}\frac{\ep^2}{\langle\epsilon\partial_x\rangle}\partial_x^{-1}\partial_tu_\epsilon(0)\\
&\quad-\frac{1}{2}\int_0^t e^{\frac{t-t_1}{\ep^2}\pa_x\langle\epsilon\partial_x\rangle}\frac{\pa_x}{\langle\epsilon\partial_x\rangle}(u_\ep(t_1))^2dt_1
+\frac{1}{2}\int_0^t e^{-\frac{t-t_1}{\ep^2}\pa_x\langle\epsilon\partial_x\rangle}\frac{\pa_x}{\langle\epsilon\partial_x\rangle}(u_\ep(t_1))^2 dt_1.
\end{aligned}$$
Then, separating with respect to the linear propagators $e^{\pm\frac{t}{\ep^2}\pa_x\langle\epsilon\partial_x\rangle}$ and translating by $\pm\frac{t}{\epsilon^2}$, we set 
\begin{equation}\label{Coupled Boussinesq0}
u_{\ep}^{\pm}(t,x):=e^{\mp \frac{t}{\ep^2}\pa_x(\langle\epsilon\partial_x\rangle-1)} u_{\epsilon,0}^{\pm} \pm \frac{1}{2}\int_0^te^{\mp \frac{t-t_1}{\ep^2}\pa_x (\langle\epsilon\partial_x\rangle-1)}\frac{\pa_x}{\langle\epsilon\partial_x\rangle}e^{\pm\frac{t_1}{\epsilon^2}\partial_x}(u_\ep(t_1))^2 dt_1,
\end{equation}
where
\begin{equation}\label{initial data}
u_{\epsilon,0}^{\pm}=\frac12\bigg(u_\epsilon(0)\mp \frac{\ep^2}{\langle\epsilon\partial_x\rangle}\partial_x^{-1}\partial_tu_\epsilon(0)\bigg).
\end{equation}
Note that by definitions, 
\begin{equation}\label{decomposition relation}
u_\epsilon(t,x)=u_\ep^+\bigg(t,x-\frac{t}{\ep^2}\bigg)+u_\ep^-\bigg(t,x+\frac{t}{\ep^2}\bigg),
\end{equation}
since $e^{-a\partial_x}$ is a translation operator, i.e., $e^{-a\partial_x}f(x)=f(x-a)$. Therefore, introducing the operator
\begin{equation}\label{linear Boussinesq flow}
S_\ep^{\pm}(t)=e^{\mp \frac{t}{\ep^2}\pa_x(\langle\epsilon\partial_x\rangle-1)}=e^{\mp its_\epsilon(-i\partial_x)},
\end{equation}
where
\begin{equation}\label{Boussinesq phase function}
s_\ep (\xi)= \frac{\xi}{\ep^2}(\langle\epsilon\xi\rangle-1),
\end{equation}
and using that $e^{a\pa_x}(f^2)=f(x+a)^2=(e^{a\pa_x}f)^2$ for the nonlinear term, we may write \eqref{Coupled Boussinesq0} in a compact form as 
\begin{align}\label{Coupled Boussinesq}
\boxed{\quad u_{\ep}^{\pm}(t)=S_\ep^{\pm}(t) u_{\epsilon,0}^{\pm} \pm \frac{1}{2}\int_0^tS_\ep^{\pm}(t-t_1)\frac{\pa_x}{\langle\epsilon\partial_x\rangle}\Big\{ \big(u_\ep^\pm (t_1)+ e^{\pm \frac{2t_1}{\ep^2}\pa_x} u_\ep^\mp (t_1) \big)^2  \Big\} dt_1.\quad}
\end{align}
From now on, we call \eqref{Coupled Boussinesq} the \textit{coupled Boussinesq system}. 

\begin{remark}
Suppose that $(u_{\ep}^+, u_{\ep}^-)$ is a solution to the system \eqref{Coupled Boussinesq} in a proper function space (see Proposition \ref{Prop : local sol to Boussinesq}). Then, by construction together with \eqref{decomposition relation} and the well-posedness theory for the original Boussinesq equation \cite{Linares1993}, the counter-propagating two-wave $\epsilon^2u_\ep^+(\epsilon^3t, \epsilon^2(x-\frac{t}{\epsilon}))+\epsilon^2 u_\ep^-(\epsilon^3t, \epsilon^2(x+\frac{t}{\epsilon}))$ is a unique solution to the equation \eqref{Boussinesq} with given initial data.
\end{remark}

\begin{remark}\label{formal convergence remark}
\begin{enumerate}
\item The linear Boussinesq flow $S_\ep^{\pm}(t)$ formally converges to the Airy flow
\begin{equation}\label{Airy flow}
S^{\pm}(t) = e^{\pm \frac{t}{2}\partial_x^3},
\end{equation}
because $s_\ep (\xi)=\frac{\xi^3}{1+\langle\epsilon\xi\rangle}\to s(\xi)=\frac{\xi^3}{2}$ for each $\xi\in\mathbb{R}$. Note that in \eqref{Coupled Boussinesq0}, the spatial translation operator $e^{\pm\frac{t\partial_x}{\epsilon^2}}$ is inserted for this convergence by cancelling out the linear component in $\frac{t}{\ep^2}\pa_x\langle\epsilon\partial_x\rangle$. 
\item In \eqref{Coupled Boussinesq}, $u_{\ep}^{\pm}$ is coupled with $u_{\ep}^{\mp}$ via the term $e^{\pm \frac{2t_1}{\ep^2}\pa_x} u_\ep^\mp (t_1)$ in nonlinearity. However, this coupled term is expected to vanish as $\epsilon\to 0$. Indeed, if $u_{\ep}^{\pm}(t,x)\approx S_\ep^{\pm}(t) u_{\epsilon,0}^{\pm}$ at least on a short time interval (by the perturbative nature of the nonlinear equation), then  $e^{\pm \frac{2t}{\ep^2}\pa_x} u_\ep^\mp (t)$ can be approximated by
$$e^{\pm  \frac{t}{\ep^2}\pa_x(\langle\epsilon\partial_x\rangle+1)} u_{\epsilon, 0}^{\mp}(x)=\frac{1}{2\pi}\int_{\R}e^{\pm \frac{it}{\ep^2}\xi(\langle\epsilon\xi\rangle+1)}e^{ix\xi} \widehat{u_{\epsilon, 0}^{\mp}}(\xi)d\xi.$$
Then, the diverging group velocity $(\pm \frac{\xi}{\ep^2}(\langle\epsilon\xi\rangle+1))'=\pm\frac{1}{\ep^2}(\langle\epsilon\xi\rangle+1 + \frac{\xi^2}{\langle\epsilon\xi\rangle})\to\pm\infty$ as $\epsilon\to0$ would lead fast dispersion.
\item By (1) and (2), each $u_\epsilon^\pm(t,x)$ formally converges to the solution to the KdV equation 
\begin{equation}\label{integral KdV}
\boxed{\quad w_\epsilon^{\pm}(t) = S^\pm(t)u_{\epsilon,0}^{\pm} \pm \frac12 \int_0^t S^\pm(t-t_1)\partial_x (w_\epsilon^{\pm}(t_1))^2dt_1.\quad}
\end{equation}
\end{enumerate}
\end{remark}

By the reformulation of the problem (see \eqref{rescaled u}, \eqref{initial data} and \eqref{decomposition relation}), our main result in the introduction can be translated as follows. The rest of this paper is devoted to its proof. 

\begin{theorem}[Reformulated Korteweg-de Vries limit for the Boussinesq equation]\label{main theorem'}
Let $\ep\in(0,1]$, and suppose that $0<s\leq 5$ and 
$$\sup_{\epsilon\in(0,1]}\|u_{\epsilon,0}^{\pm}\|_{H^s}\leq R.$$
Then, there exist $T(R)>0$ and a unique solution $u_\ep^{\pm}\in C_t([-T,T]; H^s)$ to the coupled Boussinesq system \eqref{Coupled Boussinesq} with an initial data $u_{\epsilon,0}^{\pm}$ such that 
$$\|u_\ep^\pm(t)- w_\epsilon^\pm(t)\|_{C_t([-T,T]; L^2)} \lesssim_R\ep^{\min\{\frac{2s}{5}, \frac{1}{2}\}},$$
where $w_\ep^{\pm}\in C_t(\mathbb{R}; H^s)$ is a unique solution to the KdV equation \eqref{integral KdV} with the initial data $u_{\epsilon,0}^{\pm}$.
\end{theorem}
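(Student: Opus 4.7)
I would carry out the comparison in Bourgain-type Fourier restriction spaces, as in the standard low-regularity theory for KdV. For $s>0$ and a suitable $b>1/2$, let $X_\ep^{s,b}$ denote the space adapted to the Boussinesq propagator $S_\ep^\pm$, with norm $\|u\|_{X_\ep^{s,b}}^2=\int\langle\xi\rangle^{2s}\langle\tau\mp s_\ep(\xi)\rangle^{2b}|\hat u(\tau,\xi)|^2\,d\tau\,d\xi$, and let $X^{s,b}$ be its Airy analogue (phase $\pm\xi^3/2$). The first step is to prove uniform-in-$\ep$ local well-posedness for the coupled Boussinesq system \eqref{Coupled Boussinesq} in $X_\ep^{s,b}\cap C_t([-T,T];H^s)$ on a time interval $[-T,T]$ with $T=T(R)>0$, via a Picard contraction. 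The core input is a pair of bilinear estimates handling both the self-interaction $(u_\ep^\pm)^2$ and the cross interaction involving $e^{\pm 2t\pa_x/\ep^2}u_\ep^\mp$, in which the smoothing prefactor $\frac{\pa_x}{\langle\ep\pa_x\rangle}$ is used to absorb the derivative loss. Local well-posedness in $X^{s,b}$ for the KdV equation \eqref{integral KdV} is classical.

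\textbf{The obstruction and the bridging equation.} A direct difference estimate $\|u_\ep^\pm-w_\ep^\pm\|$ does not close in either Bourgain space, because each Duhamel formulation lives naturally on its own dispersion curve. Following the hint in the introduction, I would insert the frequency-localized decoupled Boussinesq equation, whose solution $v_\ep^\pm$ solves
\begin{equation*}
v_\ep^\pm(t)=S_\ep^\pm(t)P_{\le N}u_{\ep,0}^\pm\pm\tfrac12\int_0^t S_\ep^\pm(t-t_1)\tfrac{\pa_x}{\langle\ep\pa_x\rangle}P_{\le N}\bigl((v_\ep^\pm(t_1))^2\bigr)\,dt_1,
\end{equation*}
with $N=\tfrac12\ep^{-2/5}$ as in \eqref{frequency truncation operator}. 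Because $v_\ep^\pm$ uses the Boussinesq propagator, it can be compared with $u_\ep^\pm$ in $X_\ep^{0,b}$; because it is decoupled and frequency-supported in $|\xi|\le 2N$, it can also be compared with $w_\ep^\pm$ in $X^{0,b}$. The triangle inequality then reduces the theorem to two separate estimates.

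\textbf{The two difference estimates.} For $\|u_\ep^\pm-v_\ep^\pm\|$ in $X_\ep^{0,b}$ on $[-T,T]$, the three source terms are (i) the discarded initial-data tail $\|(1-P_{\le N})u_{\ep,0}^\pm\|_{L^2}\lesssim N^{-s}R\lesssim\ep^{2s/5}R$; (ii) the discarded high-frequency part of the self-interaction, controlled similarly using the uniform $H^s$ bound on $u_\ep^\pm$; and (iii) the cross-term $e^{\pm 2t\pa_x/\ep^2}u_\ep^\mp$, whose fast group velocity $\pm\ep^{-2}(\langle\ep\pa_x\rangle+1)$ (plus derivative correction) yields a quantitative gain in an oscillatory bilinear estimate, at best of order $\ep^{1/2}$. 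For $\|v_\ep^\pm-w_\ep^\pm\|$ in $X^{0,b}$, both sides are decoupled and frequency-localized, and the linear discrepancy is driven by the phase expansion
\begin{equation*}
\bigl|s_\ep(\xi)-\tfrac{\xi^3}{2}\bigr|=\Bigl|\tfrac{\xi^3(1-\langle\ep\xi\rangle)}{2(1+\langle\ep\xi\rangle)}\Bigr|\lesssim\ep^2|\xi|^5,
\end{equation*}
which on $\{|\xi|\le 2N\}$ gives $\|S_\ep^\pm(t)P_{\le N}f-S^\pm(t)P_{\le N}f\|_{L^2}\lesssim|t|\ep^2N^{5-s}\|f\|_{H^s}\lesssim T\ep^{2s/5}R$ for $t\in[-T,T]$. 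Propagating this linear rate through a Duhamel comparison using the standard KdV bilinear estimate yields $\|v_\ep^\pm-w_\ep^\pm\|\lesssim_R\ep^{2s/5}$. Combining the two pieces produces the claimed rate $\ep^{\min\{2s/5,1/2\}}$.

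\textbf{Main obstacle.} The genuinely hard step, as flagged in the introduction, is the pair of bilinear estimates uniform in $\ep\in(0,1]$ with an explicit $\ep^{1/2}$ gain on the cross-interaction. The Boussinesq resonance function $s_\ep(\xi_1)+s_\ep(\xi_2)-s_\ep(\xi_1+\xi_2)$ is not a polynomial, so each frequency configuration must be re-analyzed with careful tracking of the prefactor $\frac{\pa_x}{\langle\ep\pa_x\rangle}$, which behaves like a derivative at low frequency but saturates at $O(\ep^{-1})$ at high frequency. Extracting the $\ep^{1/2}$ gain from the oscillator $e^{\pm 2t\pa_x/\ep^2}$ requires a non-stationary-phase argument exploiting the separation of the two Boussinesq dispersion branches within the $X^{s,b}$ machinery, and it is this cross-term estimate that ultimately caps the convergence rate at $\ep^{1/2}$.
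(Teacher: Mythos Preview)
Your proposal is correct and follows essentially the same route as the paper: introduce the frequency-localized decoupled Boussinesq solution $v_\ep^\pm$ as a bridge, estimate $u_\ep^\pm-v_\ep^\pm$ in the Boussinesq-adapted $X_{\ep,\pm}^{0,b}$ norm (using uniform bilinear estimates for the self-interaction and an $\ep^{1/2}$-gaining bilinear estimate for the cross term), and estimate $v_\ep^\pm-w_\ep^\pm$ in the Airy-adapted $X_\pm^{0,b}$ norm via the phase-difference bound $|s_\ep(\xi)-\xi^3/2|\lesssim\ep^2|\xi|^5$ on $\{|\xi|\le N\}$. The only minor refinements in the paper are that the linear comparison $S_\ep^\pm-S^\pm$ is carried out directly in $X_\pm^{0,b}$ (not just $L^2$) via a norm-equivalence lemma on low frequencies, and the nonlinear difference is organized as $(u_\ep^\pm+v_\ep^\pm)(u_\ep^\pm-v_\ep^\pm)$ to close by bootstrap rather than treating the high-frequency nonlinear tail separately; but your outline already anticipates both points.
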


\begin{remark}
\begin{enumerate}
\item The constraint $s\leq 5$ is from the approximation lemma (Lemma \ref{Lem:difference linear sol in Xsb}), more essentially from the leading order term in the gap $s_\epsilon(\xi)-s(\xi)=-\frac{\epsilon^2\xi^5}{8}+\frac{\epsilon^4\xi^7}{16}\cdots$.
\item When $\frac{5}{4}\leq s\leq 5$, assuming more regularity on initial data does not improve the $O(\sqrt{\epsilon})$-rate of convergence. This is due to Lemma \ref{Lem:BL2}.
\end{enumerate}
\end{remark}

\subsection{Frequency localized decoupled Boussinesq equation}\label{subsec: Frequency localized decoupled Boussinesq equation}

Despite the positive aspects mentioned in Remark \ref{formal convergence remark}, we must pay attention to the high frequency contribution of solutions. Indeed, in this article, the Fourier restriction norm of Bourgain \cite{B-1993Sch,B-1993KdV} will be employed, because it is known to be a fundamental tool to capture smoothing properties from dispersion, in particular, for the low regularity theory, and our goal is to reduce the regularity requirement for the KdV limit. For $s,b\in \R$, the Fourier restriction norm associated with the Boussinesq flow $S_\epsilon^\pm(t)$ (resp., that associated with the Airy flow $S^\pm(t)$) is defined by
\begin{equation}\label{Fourier restriction norm}
\begin{aligned}
\|u\|_{X^{s,b}_{\ep,\pm}}:=\| \langle \xi \rangle^{s}\langle \tau\pm s_\ep(\xi)\rangle^b \tilde{u}(\tau,\xi)\|_{L_{\tau,\xi}^2}\quad\Big(\textup{resp., }\|u\|_{X^{s,b}_{\pm}}:=\| \langle \xi \rangle^{s}\langle \tau\pm s(\xi)\rangle^b \tilde{u}(\tau,\xi)\|_{L_{\tau,\xi}^2}\Big),
\end{aligned}
\end{equation}
where $\tilde{u}$ denotes the space-time Fourier transform of $u$, i.e., 
\begin{equation}\label{space-time Fourier transform}
\tilde{u}(\tau,\xi)=\iint u(t,x)e^{-i(t\tau+x\xi)} dxdt.
\end{equation}
However, a problem is that the Fourier restriction norm is very sensitive to the choice of the associated linear flow. In our situation, the linear Boussinesq flow $S_\ep^{\pm}(t)$ turns out to behave completely differently from the Airy flow $S^{\pm}(t)$ in high frequencies, because $s_\epsilon(\xi)\approx\frac{|\xi|\xi}{\epsilon}$ when $|\xi|\gg\frac{1}{\epsilon}$. For this reason, we are unable to directly measure the difference between \eqref{Coupled Boussinesq} and \eqref{integral KdV} either in the $X_{\epsilon,\pm}^{0,b}$-norm or in the $X_{\pm}^{0,b}$-norm (see also Remark \ref{high frequency worry}).

To overcome this obstacle, we introduce the following auxiliary equation 
\begin{equation}\label{LFB}
\boxed{\quad v_{\ep}^{\pm}(t)=S_\ep^{\pm}(t)P_{\leq N}u_{\epsilon,0}^{\pm} \pm \frac{1}{2}\int_0^tS_\ep^{\pm}(t-t_1)  P_{\leq N}\frac{\pa_x }{\langle\epsilon\partial_x\rangle}(P_{\leq N}v_\ep^\pm (t_1))^2   dt_1,\quad}
\end{equation}
where $P_{\le N}$ denotes the ``sharp'' frequency truncation operator defined by $\widehat{P_{\le N}f}(\xi)=\mathbbm{1}_{\{|\xi|\leq N\}}\hat{f}(\xi)$ and $N=\frac{1}{2}\epsilon^{-\frac{2}{5}}$. We call it the \textit{frequency localized decoupled Boussinesq equation}, since the coupled nonlinearity is dropped and $P_{\leq N}$ cuts out high frequencies. Note that if $v_\epsilon^\pm$ is a solution, then it is localized in $|\xi|\leq N$ on the Fourier side. Hence, we may write $(P_{\leq N}v_\ep^\pm (t_1))^2=(v_\ep^\pm (t_1))^2$ on the right hand side of \eqref{LFB}. 

By construction, it suffices to show that the following two differences converge to zero;
\begin{equation}\label{difference1}
\begin{aligned}
&(u_{\ep}^{\pm}-v_{\ep}^{\pm})(t)\\
&=S_\ep^{\pm}(t)(1-P_{\leq N})u_{\epsilon,0}^{\pm} &\textup{(high freq. linear flow)}\\
&\quad\pm \frac{1}{2}\int_0^tS_\ep^{\pm}(t-t_1)\frac{\pa_x}{\langle\epsilon\partial_x\rangle}\big\{(u_\ep^\pm+v_\ep^\pm)(u_\ep^\pm-v_\ep^\pm)\big\}(t_1)dt_1&\textup{(nonlinear difference)}\\
&\quad\pm \frac{1}{2}\int_0^tS_\ep^{\pm}(t-t_1)\frac{\pa_x}{\langle\epsilon\partial_x\rangle}\Big\{ 2u_\ep^\pm e^{\pm \frac{2t_1}{\ep^2}\pa_x}u_\ep^\mp+ e^{\pm \frac{2t_1}{\ep^2}\pa_x} (u_\ep^\mp)^2\Big\} (t_1)dt_1&\textup{(coupling nonlinearity)}
\end{aligned}
\end{equation}
and
\begin{equation}\label{difference2}
\begin{aligned}
&(v_{\ep}^{\pm}-w_{\ep}^{\pm})(t)\\
&=(1-P_{\leq N})w_{\ep}^{\pm}(t)&\textup{(high freq. KdV flow)}\\
&\quad+(S_\ep^{\pm}(t)-S^{\pm}(t))P_{\leq N}u_{\epsilon,0}^{\pm}&\textup{(linear low freq. error)}\\
&\quad\pm \frac{1}{2}\int_0^tS_\ep^{\pm}(t-t_1)\frac{\pa_x}{\langle\epsilon\partial_x\rangle}P_{\leq N}\big\{(v_\ep^\pm+w_\ep^\pm)(v_\ep^\pm-w_\ep^\pm)\big\}(t_1)dt_1&\textup{(nonlinear difference)}\\
&\quad\pm \frac{1}{2}\int_0^t(S_\ep^{\pm}(t-t_1)-S^{\pm}(t-t_1))\frac{\pa_x}{\langle\epsilon\partial_x\rangle}P_{\leq N}(w_\epsilon^{\pm}(t_1))^2dt_1&\textup{(nonlinear error 1)}\\
&\quad\mp \frac{1}{2}\int_0^tS^{\pm}(t-t_1)\partial_x\bigg( 1 - \frac{1 }{\langle\epsilon\partial_x\rangle} \bigg)P_{\leq N}(w_\epsilon^{\pm}(t_1))^2dt_1.&\textup{(nonlinear error 2)}
\end{aligned}
\end{equation}
For \eqref{difference1}, we take the $X_{\epsilon,\pm}^{0,b}$-norm. Then, the high frequency linear flow vanishes by the regularity gap; we measure it in $X_{\epsilon,\pm}^{0,b}$, assuming $\|u_{\epsilon,0}^{\pm}\|_{H^s}$ is uniformly bounded. On the other hand, the coupling nonlinearity converges to zero by fast dispersion. For \eqref{difference2}, the high frequency part of the KdV flow vanishes by the regularity gap. The (frequency localized) linear error and the nonlinear errors converge to zero because of symbol convergences in low frequencies. An important remark is that even though two flows $S_\ep^{\pm}(t)$ and $S^{\pm}(t)$ are present in \eqref{difference2}, we can take the $X_{\pm}^{0,b}$-norm, since the $X_{\epsilon,\pm}^{0,b}$- and the $X_{\pm}^{0,b}$-norms are equivalent in low frequencies (see Lemma \ref{Lem:frequency loc norm}). Finally, the nonlinear differences in \eqref{difference1} and \eqref{difference2} can be dealt with by proving proper uniform bounds (see Proposition \ref{Prop : local sol to Boussinesq}). 

\subsection{Outline of the paper}
The rest of this paper is organized as follows. In Section \ref{sec: linear estimate}, we review basic properties of the Fourier restriction norm and prove linear estimates for the convergence of the linear Boussinesq flow $S_\epsilon^\pm(t)$ in low frequencies. In Section \ref{sec: bilinear estimate}, we show uniform bilinear estimates to deal with the nonlinear terms in \eqref{Coupled Boussinesq} and \eqref{LFB}. Having these tools, in Section \ref{sec: uniform bounds for nonlinear solutions}, we establish well-posedness and uniform bounds for nonlinear solutions. Finally, in Section \ref{sec: proof of the main result}, we prove the main theorem.



\section{Linear estimates for the linear Boussinesq flow}\label{sec: linear estimate}

In this section, we prove some linear estimates in the Fourier restriction norms associated to the linear Boussinesq and the Airy flows (see \eqref{Fourier restriction norm} for the definitions). They will be used to show the convergence of the linear and the nonlinear errors in \eqref{difference2}.

\subsection{Basic properties of the Fourier restriction norm}

We recall the general theory on Fourier restriction norms (see \cite{Linares2015, Tao2006} for details). For a general symbol $p=p(\xi):\R\rightarrow\R$ and for $s,b\in \R$, we define the $X_{\tau=p(\xi)}^{s,b}$-space as the completion of $\mathcal{S}(\R^2)$ with respect to the norm
$$\|u\|_{X_{\tau=p(\xi)}^{s,b}}:=\left\| \langle \xi \rangle^{s}\langle \tau -  p(\xi)\rangle^b \tilde{u}(\tau,\xi)\right\|_{L_{\tau,\xi}^2(\R\times \R)},$$
where $\tilde{u}$ is the space-time Fourier transform (see \eqref{space-time Fourier transform}). 

\begin{lemma}[Properties of the Fourier restriction norm]\label{Lem:Basic property of Xsb}
Let $\eta\in C_c^\infty(\R)$. Then, the following hold for $s,b\in\R$ and $T\in(0,1]$.
\begin{enumerate}
\item (Nesting) If $s_1\leq s_2$ and $b_1\leq b_2$, then $X^{s_1,b_1}_{\tau=p(\xi)}\subseteq X^{s_2,b_2}_{\tau=p(\xi)}$. 
\item (Duality) $(X^{s,b}_{\tau=p(\xi)})^*=X^{-s,-b}_{\tau=-p(-\xi)}$.
\item (Embedding) For $b>\frac12$, $X_{\tau=p(\xi)}^{s,b}\subset C_t(\mathbb{R}; H_x^s)$.
\item (Linear flow in $X_{\tau=p(\xi)}^{s,b}$) If $b>\frac{1}{2}$, then $\|\eta(\frac{t}{T})e^{itp(-i\partial_x)} u_0\|_{X_{\tau=p(\xi)}^{s,b}} \lesssim T^{\frac12-b}\|u_0\|_{H^s}$.
\item (Stability with respect to time localization) If $-\frac12<b'\le b<\frac12$, then 
$$\|\eta(\tfrac{t}{T})u\|_{X_{\tau=p(\xi)}^{s,b'}} \lesssim T^{b-b'}\|u\|_{X_{\tau=p(\xi)}^{s,b}}.$$
If $\frac12<b\le1$, then
$$\|\eta(\tfrac{t}{T})u\|_{X_{\tau=p(\xi)}^{s,b}} \lesssim T^{\frac12-b}\|u\|_{X_{\tau=p(\xi)}^{s,b}}.$$
\item (Inhomogeneous term estimate) If $\frac12<b\le 1$, then
$$\bigg\|\eta(\tfrac{t}{T}) \int_0^te^{i(t-t_1)p(-i\partial_x)}F(t_1)dt_1\bigg\|_{X_{\tau=p(\xi)}^{s,b}} \lesssim T^{\frac12-b}\|F\|_{X_{\tau=p(\xi)}^{s,b-1}}.$$
\end{enumerate}
\end{lemma}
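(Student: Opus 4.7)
The plan is to follow the standard theory of Bourgain spaces, where the key device is the isometry $u\mapsto v := e^{-itp(-i\partial_x)}u$, under which the space-time Fourier transform behaves as $\tilde v(\tau,\xi)=\tilde u(\tau+p(\xi),\xi)$ and so
\[
\|u\|_{X^{s,b}_{\tau=p(\xi)}}=\|v\|_{H_t^b H_x^s}.
\]
With this identity, item (1) is immediate from monotonicity of the Japanese-bracket weights, and item (2) follows from Parseval together with the change of variables $(\tau,\xi)\mapsto(-\tau,-\xi)$, under which $\langle\tau-p(\xi)\rangle^{-b}$ becomes $\langle\tau-(-p(-\xi))\rangle^{-b}$.

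For the embedding (3), I would invert the space-time Fourier transform in $\tau$ and apply Cauchy--Schwarz with the weight $\langle\tau-p(\xi)\rangle^{-b}$, which is square-integrable in $\tau$ precisely when $b>\tfrac12$; this yields the pointwise-in-$t$ bound on $\|u(t)\|_{H^s}$, and continuity in $t$ follows from dominated convergence. For the linear estimate (4), the space-time Fourier transform of $\eta(t/T)e^{itp(-i\partial_x)}u_0$ equals $T\hat\eta(T(\tau-p(\xi)))\hat u_0(\xi)$, so the $X^{s,b}$-norm reduces to a one-variable integral of $\langle\sigma\rangle^{2b}T^2|\hat\eta(T\sigma)|^2$ against $\|u_0\|_{H^s}^2$; the change of variables $\sigma\mapsto T\sigma$ then produces the $T^{1/2-b}$ factor, using $\langle T^{-1}\sigma\rangle\lesssim\langle\sigma\rangle$ since $T\le 1$.

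The inhomogeneous estimate (6) follows by expanding the Duhamel integral on the Fourier side, writing the kernel as the difference quotient $(e^{it\tau}-e^{itp(\xi)})/i(\tau-p(\xi))$, splitting into the regions $\{|\tau-p(\xi)|\le 1/T\}$ and its complement, and applying a Taylor expansion plus (4) in the first region and a direct $L^2$ bound in the second; the time cutoff is then reinserted via (5).

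The technical heart of the lemma, and the step I expect to be the main obstacle, is item (5). Writing the convolution identity
\[
\widetilde{\eta(t/T)u}(\tau,\xi)=T\int_\R \hat\eta(T(\tau-\tau'))\,\tilde u(\tau',\xi)\,d\tau',
\]
I would split the $\tau'$-integral into the region $|\tau-\tau'|\lesssim\langle\tau'-p(\xi)\rangle$, where $\langle\tau-p(\xi)\rangle\sim\langle\tau'-p(\xi)\rangle$ and rapid decay of $\hat\eta$ combined with the elementary scaling estimate $\int T|\hat\eta(T\sigma)|\langle\sigma\rangle^{b-b'}\,d\sigma \lesssim T^{b-b'}$ produces the desired gain, and its complement, where $\langle\tau-p(\xi)\rangle\lesssim\langle\tau-\tau'\rangle$ so that one trades the mismatched weight for polynomial decay of $\hat\eta$. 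The delicacy is that the restriction $-\tfrac12<b'\le b<\tfrac12$ is exactly what keeps both the $\tau$- and $\tau'$-integrals convergent after these manipulations; the separate sub-case $\tfrac12<b\le 1$ is handled by first subtracting off the free evolution $e^{itp(-i\partial_x)}u(0)$ (which enjoys the bound from (4)) and estimating the mean-zero remainder via the $b<\tfrac12$ theory applied to $\partial_t$ of the remainder.
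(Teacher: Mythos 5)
The paper does not prove this lemma at all --- it is quoted as standard, with a pointer to the references [Linares--Ponce, Tao] --- and your proposal is exactly the standard textbook argument (conjugation to $H^b_tH^s_x$, the identity $\widetilde{\eta(t/T)e^{itp}u_0}=T\hat\eta(T(\tau-p(\xi)))\hat u_0(\xi)$, convolution with $T\hat\eta(T\cdot)$ for time localization, and the difference-quotient kernel for the Duhamel term), so it is "the same approach" in the only meaningful sense and is correct in outline. Two small repairs are needed: in (4) the auxiliary inequality should be $\langle T^{-1}\sigma\rangle\le T^{-1}\langle\sigma\rangle$ for $T\le 1$ --- the inequality you wrote, $\langle T^{-1}\sigma\rangle\lesssim\langle\sigma\rangle$, is false and would produce $T^{1/2}$ instead of $T^{1/2-b}$; and for the sub-case $\frac12<b\le1$ of (5) your detour through $\partial_t$ of a mean-zero remainder is vaguer than necessary --- the same convolution identity together with $\langle\tau-p(\xi)\rangle^b\lesssim\langle\tau-\tau'\rangle^b+\langle\tau'-p(\xi)\rangle^b$ and Young's inequality (an $L^2\ast L^1$ bound for the first piece, using $\|\tilde u(\cdot,\xi)\|_{L^1_\tau}\lesssim\|\langle\cdot-p(\xi)\rangle^b\tilde u(\cdot,\xi)\|_{L^2_\tau}$ since $b>\frac12$, and an $L^1\ast L^2$ bound for the second) yields the stated $T^{\frac12-b}$ directly.
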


\subsection{Low frequency flows in the Fourier restriction norm}

An important remark is that choosing a right symbol is crucial when we use the Fourier restriction norm.
\begin{remark}\label{high frequency worry}
Consider the Fourier restriction norm of the linear Boussinesq flow with respect to the Airy flow, that is, 
\begin{equation}\label{Xsb for linear flow}
\begin{aligned}
\|\eta(t)S_\epsilon^\pm(t) u_0\|_{X_\pm^{s,b}}&=\big\|\langle\xi\rangle^s\langle \tau\pm s(\xi)\rangle^b\hat{\eta}(\tau\pm s_\epsilon(\xi))\hat{u}_0(\xi)\big\|_{L_{\tau,\xi}^2}\\
&=\big\|\langle\xi\rangle^s\langle \tau\pm  (s(\xi)-s_\epsilon(\xi))\rangle^b\hat{\eta}(\tau)\hat{u}_0(\xi)\big\|_{L_{\tau,\xi}^2},
\end{aligned}
\end{equation}
provided that $b>0$ and $u_0$ is merely in $H^s$. In spite of the convergence $s_\ep(\xi)\to s(\xi)$ in symbol (see Remark \ref{formal convergence remark} (1)), this quantity diverges as $\epsilon\to0$ due to the high frequency contribution, because $s(\xi)-s_\epsilon(\xi)\approx\frac{\xi^3}{2}-\frac{|\xi|\xi}{\epsilon}\approx \frac{\xi^3}{2}$ for $|\xi|\gg\frac{1}{\epsilon}$. 
\end{remark}

This may cause some technical issue when we directly measure the difference for two solutions $u_\epsilon^\pm$ and $w_\epsilon^\pm$ to \eqref{Coupled Boussinesq} and \eqref{integral KdV} via the Fourier restriction norm. Nevertheless, the following lemma asserts that the different Fourier restriction norms are comparable if we restrict to low frequencies.
\begin{lemma}\label{Lem:frequency loc norm}
Let $P_{\leq N}$ be the frequency cut-off given by \eqref{frequency truncation operator} with $N=\frac{1}{2}\epsilon^{-\frac{2}{5}}$. Then, for $0\leq\theta\leq 1$, we have the norm equivalence 
$$\| P_{\le N} u \|_{{X_{\pm}^{s,b}}} \sim \| P_{\le N} u \|_{X_{\tau=\mp(\theta s_\epsilon(\xi)+(1-\theta)s(\xi))}^{s,b}}.$$
\end{lemma}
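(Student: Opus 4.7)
The plan is to reduce the equivalence of the two weighted norms on the frequency-restricted region $|\xi|\le N$ to a uniform pointwise control on the symbol gap $s(\xi)-s_\epsilon(\xi)$. Since
\[
\bigl|\tau \pm s(\xi)\bigr| - \bigl|\tau \pm (\theta s_\epsilon(\xi)+(1-\theta)s(\xi))\bigr| \;=\; \mp\theta\bigl(s(\xi)-s_\epsilon(\xi)\bigr) + O(\cdot),
\]
the two Japanese brackets $\langle\tau\pm s(\xi)\rangle$ and $\langle\tau\pm(\theta s_\epsilon(\xi)+(1-\theta)s(\xi))\rangle$ will be comparable provided $|s(\xi)-s_\epsilon(\xi)|\lesssim 1$ uniformly for $|\xi|\le N$; raising to the $b$-th power and integrating against $\langle\xi\rangle^{2s}|\widetilde{P_{\le N}u}(\tau,\xi)|^2$ then gives the equivalence.

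The main calculation is therefore the pointwise bound on the symbol difference. Writing $s_\epsilon(\xi)=\frac{\xi^3}{\sqrt{1+\epsilon^2\xi^2}+1}$, one obtains the exact identity
\[
s(\xi)-s_\epsilon(\xi) \;=\; \frac{\xi^3}{2}-\frac{\xi^3}{\sqrt{1+\epsilon^2\xi^2}+1} \;=\; \frac{\xi^3}{2}\cdot\frac{\epsilon^2\xi^2}{\bigl(\sqrt{1+\epsilon^2\xi^2}+1\bigr)^2}.
\]
For $|\xi|\le N=\tfrac12\epsilon^{-2/5}$, we have $\epsilon^2\xi^2\le\tfrac14\epsilon^{6/5}\le\tfrac14$, so the denominator is bounded below by $4$ and hence
\[
\bigl|s(\xi)-s_\epsilon(\xi)\bigr| \;\le\; \frac{\epsilon^2|\xi|^5}{8} \;\le\; \frac{1}{8}\epsilon^2\bigl(\tfrac12\epsilon^{-2/5}\bigr)^5 \;=\; \frac{1}{256}.
\]
This is the precise reason the frequency cutoff is placed at $\epsilon^{-2/5}$: it is the largest power of $\epsilon^{-1}$ for which $\epsilon^2|\xi|^5$ remains $O(1)$.

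With this uniform bound in hand, for $0\le\theta\le 1$ the quantity
\[
a \;:=\; \tau\pm s(\xi), \qquad a' \;:=\; \tau\pm\bigl(\theta s_\epsilon(\xi)+(1-\theta)s(\xi)\bigr) \;=\; a \mp\theta\bigl(s(\xi)-s_\epsilon(\xi)\bigr)
\]
satisfies $|a-a'|\le \tfrac{1}{256}$, so the elementary inequality $\langle a'\rangle\sim\langle a\rangle$ (with implicit constants independent of $\epsilon$, $\theta$, $\tau$, and $\xi$ on the support of $\widetilde{P_{\le N}u}$) yields
\[
\langle\tau\pm s(\xi)\rangle^{b} \;\sim\; \langle\tau\pm(\theta s_\epsilon(\xi)+(1-\theta)s(\xi))\rangle^{b}
\]
for every $b\in\mathbb{R}$. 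Multiplying by $\langle\xi\rangle^{2s}|\widetilde{P_{\le N}u}(\tau,\xi)|^2$ and integrating over $\mathbb{R}\times\{|\xi|\le N\}$ gives the claimed norm equivalence.

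I do not foresee a genuine obstacle: the entire content is the tuning between the Fourier cutoff scale $N=\tfrac12\epsilon^{-2/5}$ and the leading-order discrepancy $\sim\epsilon^2\xi^5$ between the two dispersion relations, and the only thing to check carefully is that the implicit constant in $\langle a'\rangle\sim\langle a\rangle$ does not depend on $\epsilon$ or $\theta$, which is immediate from $|a-a'|\le\tfrac{1}{256}$.
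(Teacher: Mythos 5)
Your proof is correct and follows essentially the same route as the paper: bound the symbol gap by $|s_\epsilon(\xi)-s(\xi)|\le\tfrac18\epsilon^2|\xi|^5\le 2^{-8}$ on $|\xi|\le N=\tfrac12\epsilon^{-2/5}$ and conclude that the two Japanese brackets are uniformly comparable. The only cosmetic difference is that you obtain the symbol bound from an exact algebraic identity for $s(\xi)-s_\epsilon(\xi)$ where the paper simply invokes Taylor's theorem; your opening display with the difference of absolute values is imprecise, but the subsequent argument with $a$ and $a'=a\mp\theta(s(\xi)-s_\epsilon(\xi))$ is the rigorous version and carries the proof.
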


\begin{proof}
We only consider $-$ case. Suppose that $|\xi|\leq N$. Then, by Taylor's theorem, we have
\begin{equation}\label{symbol difference}
|s_\ep(\xi)-s(\xi)| \le \tfrac18\ep^2|\xi|^5\leq 2^{-8},
\end{equation}
and thus, $\langle \tau\pm(\theta s_\ep(\xi)+(1-\theta)s(\xi)) \rangle= \langle \tau\pm s(\xi)\pm\theta(s_\ep(\xi)-s(\xi)) \rangle\sim \langle \tau\pm s(\xi)\rangle$. By definition, this proves the lemma.
\end{proof}

Moreover, in low frequencies, the linear Boussinesq flow can be approximated by that of the Airy flow in the Fourier restriction norm associated with the Airy flow.

\begin{lemma}\label{Lem:difference linear sol in Xsb}
Let $\eta\in C_c^\infty(\R)$ be a smooth bump function satisfying \eqref{bump function}, and let $P_N$ be the frequency cut-off given by \eqref{frequency truncation operator} with $N=\frac{1}{2}\epsilon^{-\frac{2}{5}}$. Then, for $0<T\leq1$, $0\leq s\leq 5$ and $\frac{1}{2}<b\leq 1$, we have 
\begin{equation}\label{homogenous term}
\|\eta(\tfrac{t}{T})( S_\ep^{\pm}(t) - S^{\pm}(t))P_{\le N}u_0\|_{X_{\pm}^{0,b}} \lesssim \ep^{\frac{2s}{5}}T^{\frac{3}{2}-b} \|u_0\|_{H^s}
\end{equation}
and
\begin{equation}\label{Inhomogenous term}
\bigg\|\eta(\tfrac{t}{T})\int_0^t (S_\ep^{\pm}(t-t_1)-S^{\pm}(t-t_1))\eta(\tfrac{t_1}{T})(P_{\le N} F)(t_1)dt_1 \bigg\|_{X_{\pm}^{0,b}} \lesssim \ep^{\frac{2s}{5}}T^{\frac{3}{2}-b}\|F\|_{X_{\pm}^{s,b-1}}.
\end{equation}
\end{lemma}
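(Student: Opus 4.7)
The strategy is to use the fundamental theorem of calculus to isolate the symbol defect $(s_\epsilon - s)(-i\partial_x)$, which carries the $\epsilon^{2s/5}$ gain after restriction to $|\xi| \leq N$, together with an explicit time factor ($t$ for \eqref{homogenous term} and $t - \tau$ for \eqref{Inhomogenous term}), and then to reduce each piece to the standard Fourier restriction norm estimates of Lemma \ref{Lem:Basic property of Xsb}. The starting point is the operator identity, obtained from $e^{i\gamma} - 1 = i\gamma\int_0^1 e^{i\theta\gamma}\,d\theta$ applied on the Fourier side with $\gamma = \mp t(s_\epsilon(\xi) - s(\xi))$:
\begin{equation*}
S_\epsilon^\pm(t) - S^\pm(t) = \mp it\,(s_\epsilon - s)(-i\partial_x) \int_0^1 S_\theta^\pm(t)\,d\theta, \qquad S_\theta^\pm(t) := e^{\mp itQ_\theta(-i\partial_x)},
\end{equation*}
with the interpolated phase $Q_\theta(\xi) := (1-\theta)s(\xi) + \theta s_\epsilon(\xi)$. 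Taylor's theorem gives $|(s_\epsilon - s)(\xi)| \leq \tfrac{1}{8}\epsilon^2|\xi|^5$, and on $|\xi| \leq N$ the interpolation $|\xi|^5 \leq N^{5-s}|\xi|^s$ combined with $N = \tfrac{1}{2}\epsilon^{-2/5}$ yields, for $0 \leq s \leq 5$, the symbol bound $|(s_\epsilon - s)(\xi)|\mathbf{1}_{|\xi| \leq N} \lesssim \epsilon^{2s/5}\langle\xi\rangle^s$. This is the mechanism producing the $\epsilon^{2s/5}$ factor.

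For \eqref{homogenous term}, I apply the identity to $P_{\leq N}u_0$, multiply by $\eta(t/T)$, and rewrite $t\eta(t/T) = T\tilde\eta(t/T)$ where $\tilde\eta(\sigma) := \sigma\eta(\sigma)$ is again a smooth compactly supported bump. Since the expression is $P_{\leq N}$-localized, Lemma \ref{Lem:frequency loc norm} replaces the $X_\pm^{0,b}$-norm by the adapted $X_{\tau=\mp Q_\theta}^{0,b}$-norm; the symbol bound then converts this into $\epsilon^{2s/5}\|\tilde\eta(t/T)S_\theta^\pm(t) P_{\leq N}u_0\|_{X_{\tau=\mp Q_\theta}^{s,b}}$, which by Lemma \ref{Lem:Basic property of Xsb}(4) is bounded by $T^{1/2-b}\|u_0\|_{H^s}$. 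The outer factor $T$ yields the claimed $\epsilon^{2s/5} T^{3/2-b}\|u_0\|_{H^s}$.

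For \eqref{Inhomogenous term}, replacing $t$ by $t-\tau$ in the identity produces a factor $(t-\tau)$ inside the $\tau$-integral. The key algebraic step is to split $(t-\tau) = t - \tau$ and write
\begin{equation*}
\int_0^t (t-\tau)S_\theta^\pm(t-\tau)\eta(\tau/T)P_{\leq N}F(\tau)\,d\tau = tA_\theta(t) - B_\theta(t),
\end{equation*}
where $A_\theta(t)$ and $B_\theta(t)$ are the $S_\theta^\pm$-Duhamel integrals with sources $\eta(\tau/T)P_{\leq N}F(\tau)$ and $\tau\eta(\tau/T)P_{\leq N}F(\tau) = T\tilde\eta(\tau/T)P_{\leq N}F(\tau)$, respectively. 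Multiplying by $\eta(t/T)$ and using $t\eta(t/T) = T\tilde\eta(t/T)$ for the $A_\theta$-term, each piece is bounded by applying, in sequence, Lemma \ref{Lem:frequency loc norm} (to pass to the $X_{\tau = \mp Q_\theta}^{0,b}$-norm), the symbol bound (to promote to $X_{\tau = \mp Q_\theta}^{s,b}$), the inhomogeneous estimate Lemma \ref{Lem:Basic property of Xsb}(6) for the flow $S_\theta^\pm$, Lemma \ref{Lem:frequency loc norm} again (to return to an $X_\pm^{s,b-1}$-norm of the source), and the time-localization stability Lemma \ref{Lem:Basic property of Xsb}(5) (valid since $b - 1 \in (-\tfrac12, 0]$). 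This produces $T \cdot \epsilon^{2s/5} \cdot T^{1/2 - b} \|F\|_{X_\pm^{s,b-1}}$ for the $A_\theta$-piece and the same $\epsilon^{2s/5}T^{3/2-b}\|F\|_{X_\pm^{s,b-1}}$ for the $B_\theta$-piece, the extra factor of $T$ in the latter coming from $\tau\eta(\tau/T) = T\tilde\eta(\tau/T)$.

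The main obstacle is precisely the correct handling of the $(t-\tau)$ factor in \eqref{Inhomogenous term}. The naive approach of writing a double Duhamel representation (iterating the FTC to express $S_\epsilon^\pm - S^\pm$ as a time integral of $(s_\epsilon - s)(-i\partial_x)$ sandwiched between the two flows) and then applying Lemma \ref{Lem:Basic property of Xsb}(6) twice loses a factor of $T^{1/2+b}$ and produces only the weaker bound $\epsilon^{2s/5}T^{1-2b}\|F\|_{X_\pm^{s,b-1}}$. The algebraic splitting $(t-\tau) = t - \tau$ above keeps one explicit time factor outside the Fourier restriction norm and thereby recovers the sharp $T^{3/2-b}$.
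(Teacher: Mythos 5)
Your proof is correct and follows essentially the same route as the paper: the fundamental-theorem-of-calculus identity with the interpolated phase $Q_\theta$, the symbol bound $|s_\epsilon(\xi)-s(\xi)|\mathbf{1}_{\{|\xi|\le N\}}\lesssim \epsilon^{2s/5}\langle\xi\rangle^s$, the norm equivalence of Lemma \ref{Lem:frequency loc norm}, and the linear estimates of Lemma \ref{Lem:Basic property of Xsb} applied with the modified bump $\sigma\eta(\sigma)$. Your explicit splitting $(t-t_1)=t-t_1$ in the inhomogeneous case is a correct and welcome elaboration of a step the paper only sketches with ``as we proved \eqref{homogenous term}.''
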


\begin{proof}
By the fundamental theorem of calculus, 
\begin{equation}\label{linear flow FTC}
e^{\mp its_\epsilon(\xi)}-e^{\mp its(\xi)}=\mp it(s_\epsilon(\xi)-s(\xi))\bigg\{\int_0^1 e^{\mp it(\theta s_\epsilon(\xi)+(1-\theta)s(\xi))}d\theta\bigg\}.
\end{equation}
Note that interpolating two inequalities in \eqref{symbol difference}, we have
\begin{equation}\label{symbol difference'}
|s_\ep(\xi)-s(\xi)| \lesssim \ep^\frac{2s}{5}|\xi|^s\quad\textup{for }|\xi|\leq N.
\end{equation}
Therefore, by the definition of the Fourier restriction norm, it follows from Lemma \ref{Lem:frequency loc norm}, \eqref{symbol difference'} and Lemma \ref{Lem:Basic property of Xsb}  that 
$$\begin{aligned}
&\|\eta(\tfrac{t}{T})( S_\ep^{\pm}(t) - S^{\pm}(t))P_{\le N}u_0\|_{X_{\pm}^{0,b}}\\
&\leq \int_0^1 \big\|t\eta(\tfrac{t}{T})e^{\mp it(\theta s_\epsilon(-i\partial_x)+(1-\theta)s(-i\partial_x))}(s_\epsilon(-i\partial_x)-s(-i\partial_x))P_{\le N}u_0\big\|_{X_{\pm}^{0,b}}d\theta\\
&\leq \ep^\frac{2s}{5}T\int_0^1 \big\|\tfrac{t}{T}\eta(\tfrac{t}{T})e^{\mp it(\theta s_\epsilon(-i\partial_x)+(1-\theta)s(-i\partial_x))}u_0\big\|_{X_{\tau=\mp(\theta s_\epsilon(\xi)+(1-\theta)s(\xi))}^{s,b}}d\theta \lesssim \ep^{\frac{2s}{5}}T^{\frac{3}{2}-b}\|u_0\|_{H^s}.
\end{aligned}$$
For \eqref{Inhomogenous term}, we use \eqref{linear flow FTC} to write 
$$\begin{aligned}
&\int_0^t (S^{\pm}(t-t_1)-S_\ep^{\pm}(t-t_1))\eta(\tfrac{t_1}{T})(P_{\le N} F)(t_1)dt_1\\ 
&=\mp i\int_0^t e^{\mp i(t-t_1)(\theta s_\epsilon(-i\partial_x)-(1-\theta)s(-i\partial_x))}(t-t_1)(s_\epsilon(-i\partial_x)-s(-i\partial_x))\eta(\tfrac{t_1}{T})(P_{\le N} F)(t_1)dt_1.
\end{aligned}$$
Then, as we proved \eqref{homogenous term}, using Lemma \ref{Lem:frequency loc norm}, \eqref{symbol difference'} and Lemma \ref{Lem:Basic property of Xsb}, one can show that the left hand side of \eqref{Inhomogenous term} is bounded by $\sim \ep^\frac{2s}{5}T^{\frac{3}{2}-b}\|P_{\leq N}F\|_{X_{\tau=\mp(\theta s_\epsilon(\xi)+(1-\theta)s(\xi))}^{s,b-1}}\lesssim\ep^\frac{2s}{5}T^{\frac{3}{2}-b}\|F\|_{X_{\pm}^{s,b-1}}$.
\end{proof}

\section{Bilinear estimates for the linear Boussinesq flow}\label{sec: bilinear estimate}

In this section, we prove various bilinear estimates associated with the linear Boussinesq flow, analogous to the well-known bilinear estimate for the Airy flow \cite{KPV1996};
\begin{equation*}
   \|\partial_x(w_1 w_2)\|_{X_{\pm}^{s,b-1}} \lesssim \|w_1\|_{X_{\pm}^{s,b}} \|w_2\|_{X_{\pm}^{s,b}}.
\end{equation*}
for $s>-\frac{3}{4}$ and for some $b\in(\frac12,1)$. 
In fact, for a nonnegative $s\ge 0$, the following improved bilinear estimate holds\footnote{See \cite[Lemma~3.14]{Erdogan2016}, for example.}
\begin{align}\label{bilinear KdV}
   \|\partial_x(w_1 w_2)\|_{X_{\pm}^{s,-\frac14}} \lesssim \|w_1\|_{X_{\pm}^{s,b}} \|w_2\|_{X_{\pm}^{s,b}}.
\end{align}
for all $b>\frac12$. The bilinear estimates for the Boussinesq flow will be used to obtain uniform bounds for the nonlinear solutions $u_{\ep}^{\pm}$ and $v_{\ep}^{\pm}$ and to prove that the coupled nonlinearity vanishes. 

To begin with, we collect some elementary facts which will be used frequently. 

\begin{remark}\label{derivatives of s}
The derivatives of the symbol $s_\ep(\xi)=\frac{\xi}{\ep^2}(\langle\epsilon\xi\rangle-1)=\frac{\xi^3}{1+\langle\epsilon\xi\rangle}$ are given by 
$$s_\ep'(\xi)=\frac{1}{\epsilon^2}(\langle\ep\xi\rangle-1)+\frac{\xi^2}{\langle\epsilon\xi\rangle^2},\quad 
s_\ep''(\xi)=\frac{\xi(1+2\langle\epsilon\xi\rangle^2) }{\langle\epsilon\xi\rangle^3},\quad s_\ep'''(\xi)=\frac{3}{\langle\epsilon\xi\rangle^5}.$$
\end{remark}
\begin{lemma}\label{Lem:tau integral}
For $b'\ge b>\frac12$, we have
$$\int_{\R}\frac{dx}{\langle x-\alpha\rangle^{2b'}{\langle x-\beta\rangle^{2b}}}\lesssim \frac{1}{\langle \alpha-\beta\rangle^{2b}},\quad \int_{\R} \frac{dx}{\langle x\rangle^{2b}\sqrt{|x-y|}}\lesssim \frac{1}{\langle y\rangle^{\frac12}}.$$
\end{lemma}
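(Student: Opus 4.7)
The plan is to prove each of the two estimates in Lemma~\ref{Lem:tau integral} by a simple case split keyed to which factor in the denominator supplies the decay in each region. Both inequalities are standard weighted convolution bounds and will follow from elementary calculus once the regions are chosen correctly.

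For the first inequality, I would cover $\R$ by the two sets $A=\{x:|x-\alpha|\ge\tfrac12|\alpha-\beta|\}$ and $B=\{x:|x-\beta|\ge\tfrac12|\alpha-\beta|\}$, which exhaust $\R$ by the triangle inequality applied to $(x-\alpha)-(x-\beta)=\beta-\alpha$. On $A$, the factor $\langle x-\alpha\rangle^{-2b'}$ gives a gain of $\langle\alpha-\beta\rangle^{-2b'}\le\langle\alpha-\beta\rangle^{-2b}$ that can be pulled outside (here is where the hypothesis $b'\ge b$ enters), and the remaining integral $\int_\R\langle x-\beta\rangle^{-2b}\,dx$ is finite because $2b>1$. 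The region $B$ is handled symmetrically with the roles of $\alpha$ and $\beta$ swapped, yielding the same bound.

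For the second inequality, the subtlety is combining the integrable singularity $|x-y|^{-1/2}$ at $x=y$ with the polynomial weight $\langle x\rangle^{-2b}$. For bounded $|y|$ (say $|y|\le 2$) the integral is uniformly $O(1)$ since $|x-y|^{-1/2}$ is locally integrable and $\langle x\rangle^{-2b}$ is integrable at infinity, which is consistent with $\langle y\rangle^{-1/2}\sim 1$. For $|y|\ge 2$ I would split at the threshold $|x-y|\sim |y|/4$: on $\{|x-y|\le |y|/4\}$ one has $|x|\sim|y|$, so $\langle x\rangle^{-2b}$ pulls out as $\langle y\rangle^{-2b}$ and the residual integral $\int_{|x-y|\le|y|/4}|x-y|^{-1/2}\,dx\lesssim\sqrt{|y|}$ produces the bound $\langle y\rangle^{1/2-2b}\lesssim\langle y\rangle^{-1/2}$; on the complement, $|x-y|^{-1/2}\lesssim|y|^{-1/2}$ pulls out and $\int_\R\langle x\rangle^{-2b}\,dx\lesssim 1$ yields $\langle y\rangle^{-1/2}$.

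I do not anticipate any real obstacle: both bounds follow from elementary region splittings and the condition $b>\tfrac12$ is invoked exactly where it is needed, namely to guarantee integrability at infinity of the residual $\langle\cdot\rangle^{-2b}$ factor in each case.
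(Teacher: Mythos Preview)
Your proof is correct. The paper does not actually supply a proof of this lemma; it is stated as a collection of ``elementary facts'' and used without further justification in the bilinear estimates that follow. Your region-splitting arguments are the standard way to establish both bounds, and you have correctly identified where the hypotheses $b'\ge b$ and $b>\tfrac12$ enter.
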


First, we prove the following bilinear estimate for the main nonlinear terms $\frac{\pa_x}{\langle\epsilon\partial_x\rangle}\big(u_\ep^\pm)^2$ and $\frac{\pa_x}{\langle\epsilon\partial_x\rangle}\big(v_\ep^\pm)^2$ in the equations \eqref{Coupled Boussinesq} and \eqref{LFB}. 

\begin{lemma}[Bilinear estimate I]\label{Lem:BL1}
For $s\ge0$ and $b>\frac12$, we have
$$\bigg\| \frac{\pa_x}{\langle\epsilon\partial_x\rangle}(uv)\bigg\|_{X_{\ep,\pm}^{s,-\frac{1}{4}}}
\ls \|u\|_{X_{\ep,\pm}^{s,b}} \|v\|_{X_{\ep,\pm}^{s,b}}.$$
\end{lemma}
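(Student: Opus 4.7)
The argument mirrors the proof of the classical bilinear KdV estimate \eqref{bilinear KdV}, with the KdV phase $s(\xi)=\xi^3/2$ replaced by the Boussinesq phase $s_\epsilon(\xi)=\xi^3/(1+\langle\epsilon\xi\rangle)$; the essential point is to verify that every step goes through with a constant independent of $\epsilon$. As a preliminary reduction, since $s\ge 0$ one has $\langle\xi_1+\xi_2\rangle^s\lesssim \langle\xi_1\rangle^s+\langle\xi_2\rangle^s$, so by symmetry the $\langle\xi\rangle^s$ weight on the output can be placed on the larger of the two input factors; the multiplier $\tfrac{\partial_x}{\langle\epsilon\partial_x\rangle}$ is a scalar Fourier multiplier in $\xi=\xi_1+\xi_2$ and plays no role in this reduction, so it suffices to treat the case $s=0$. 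Dualizing via Lemma~\ref{Lem:Basic property of Xsb}(2) and applying Plancherel, the inequality reduces to the trilinear bound
\begin{equation*}
\iiint \frac{|\xi|}{\langle\epsilon\xi\rangle}\cdot\frac{\tilde f(\tau_1,\xi_1)\,\tilde g(\tau_2,\xi_2)\,\tilde h(\tau,\xi)}{\langle\sigma_1\rangle^b\langle\sigma_2\rangle^b\langle\sigma\rangle^{1/4}}\,d\xi_1 d\xi_2 d\tau_1 d\tau_2\ \lesssim\ \|f\|_{L^2}\|g\|_{L^2}\|h\|_{L^2},
\end{equation*}
where $\xi=\xi_1+\xi_2$, $\tau=\tau_1+\tau_2$, $\sigma_i=\tau_i\pm s_\epsilon(\xi_i)$, and $\sigma=\tau\pm s_\epsilon(\xi)$.

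The engine of the argument is the resonance identity
$$\sigma-\sigma_1-\sigma_2=\pm\Phi_\epsilon(\xi_1,\xi_2),\qquad \Phi_\epsilon(\xi_1,\xi_2):=s_\epsilon(\xi_1+\xi_2)-s_\epsilon(\xi_1)-s_\epsilon(\xi_2),$$
together with a lower bound of the schematic form $|\Phi_\epsilon(\xi_1,\xi_2)|\gtrsim |\xi\,\xi_1\,\xi_2|/\max(\langle\epsilon\xi\rangle,\langle\epsilon\xi_1\rangle,\langle\epsilon\xi_2\rangle)$. I would derive this lower bound from the explicit formula $s_\epsilon''(\eta)=\eta(1+2\langle\epsilon\eta\rangle^2)/\langle\epsilon\eta\rangle^3$ in Remark~\ref{derivatives of s}: note that $|s_\epsilon''(\eta)|\sim|\eta|/\langle\epsilon\eta\rangle$ and that $s_\epsilon''$ has the same sign as $\eta$, so after a case analysis in the relative signs of $\xi_1,\xi_2$ one obtains the desired bound by integrating $s_\epsilon''$ against itself on an appropriate rectangle (this reproduces $\Phi=\tfrac{3}{2}\xi\xi_1\xi_2$ of KdV in the regime $|\epsilon\xi_{\max}|\ll 1$ and degrades by exactly $1/\langle\epsilon\xi_{\max}\rangle$ at high frequencies). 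The crucial observation is that the weight $\tfrac{|\xi|}{\langle\epsilon\xi\rangle}$ that has to be absorbed is comparable to $|s_\epsilon''(\xi)|$, so it is swallowed by a fractional power of $|\Phi_\epsilon|$ up to lower-order factors of the smaller frequencies.

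The rest follows the standard template. I would dyadically decompose in the frequencies $|\xi|,|\xi_1|,|\xi_2|$ and in the modulations $\langle\sigma\rangle,\langle\sigma_1\rangle,\langle\sigma_2\rangle$; by the resonance identity the largest of the three modulations is always $\gtrsim|\Phi_\epsilon|$, and the room $\langle\sigma\rangle^{1/4}$ (together with the extra slack $b-\tfrac12>0$ available in $\langle\sigma_1\rangle^b,\langle\sigma_2\rangle^b$) provides the fractional power needed to absorb $\tfrac{|\xi|}{\langle\epsilon\xi\rangle}$. Cauchy--Schwarz in the remaining frequency variables combined with Lemma~\ref{Lem:tau integral} for the $\tau_1$-integration then closes each dyadic piece, and the dyadic sums converge since $b>\tfrac12$. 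The main obstacle I foresee is maintaining $\epsilon$-uniformity across the transition regime $|\epsilon\xi|\sim 1$: for $|\epsilon\xi|\ll 1$ the phase and the multiplier both reduce to the familiar KdV quantities, while for $|\epsilon\xi|\gg 1$ the phase is half-wave--like ($s_\epsilon\sim|\xi|\xi/\epsilon$) and the multiplier is bounded by $2/\epsilon$; verifying that the $\langle\epsilon\cdot\rangle$-factors in the multiplier and in the resonance lower bound offset each other so that the constants in both regimes glue without $\epsilon$-loss is the chief bookkeeping burden of the proof.
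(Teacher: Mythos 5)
Your outline follows the resonance--function/modulation--decomposition template, which is genuinely different from the paper's route: the paper performs a single Cauchy--Schwarz in $(\tau_1,\xi_1)$ (no dyadic decomposition, no case analysis on which modulation dominates) and reduces everything to the uniform boundedness of the weight
\begin{equation*}
I_\ep(\tau,\xi)=\frac{\xi^2}{\langle\ep\xi\rangle^2\la \tau+s_\ep(\xi)\ra^{\frac{1}{2}}}\int_{\R}\frac{d\xi_1}{\la \tau +s_\epsilon(\xi_1)+s_\epsilon(\xi-\xi_1)\ra^{2b}},
\end{equation*}
which is then handled by the change of variables $z=s_\epsilon(\xi_1)+s_\epsilon(\xi-\xi_1)$. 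The difference of routes would be acceptable, but your argument has a genuine gap in the case where the output modulation $\la\sigma\ra$ is the largest of the three. There the only power of $|\Phi_\epsilon|$ available is $\la\sigma\ra^{1/4}\gtrsim|\Phi_\epsilon|^{1/4}$ --- the slack $b-\tfrac12$ sits on $\sigma_1,\sigma_2$, which are now the \emph{small} modulations and contribute nothing --- and $|\Phi_\epsilon|^{1/4}$ does not absorb $|\xi|/\la\epsilon\xi\ra$: already in the KdV regime $|\xi|\sim|\xi_1|\sim|\xi_2|\sim M\ll\epsilon^{-1}$ one has $|\Phi_\epsilon|^{1/4}\sim M^{3/4}\ll M\sim|\xi|/\la\epsilon\xi\ra$. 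So the assertion that ``the room $\la\sigma\ra^{1/4}$ provides the fractional power needed'' fails at comparable frequencies.

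The correct mechanism in that case is not the size of the resonance function but a square-root gain from the $\xi_1$-integration across the level sets of $\xi_1\mapsto s_\epsilon(\xi_1)+s_\epsilon(\xi-\xi_1)$, whose derivative vanishes to first order at $\xi_1=\tfrac{\xi}{2}$: after the $\tau_1$-integration one substitutes $z$, uses $z'(\xi_1)\gtrsim \xi(2\xi_1-\xi)/\la\ep\xi_1\ra$ and a Taylor expansion at the stationary point to produce the factor $(z-z(\tfrac{\xi}{2}))^{-1/2}$, and the second inequality of Lemma~\ref{Lem:tau integral} converts this into $\la\tau+z(\tfrac{\xi}{2})\ra^{-1/2}$, which pairs with $\la\tau+s_\epsilon(\xi)\ra^{-1/2}$ via $s_\epsilon(\xi)-z(\tfrac{\xi}{2})\sim\xi^3/\la\epsilon\xi\ra$ to cancel the full weight $\xi^2/\la\ep\xi\ra^2$ times $(\xi/\la\ep\xi\ra)^{-1/2}$. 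Your ``Cauchy--Schwarz in the remaining frequency variables combined with Lemma~\ref{Lem:tau integral}'' would have to be exactly this computation, so the dominant-$\sigma$ case of your scheme collapses to the paper's argument rather than being closed by the modulation bookkeeping you describe. (For the record, your reduction to $s=0$ is fine, your heuristic lower bound $|\Phi_\epsilon|\gtrsim|\xi\xi_1\xi_2|/\la\epsilon\xi_{\max}\ra$ is consistent with the paper's formulas and plausible in all sign configurations, and the case where an input modulation dominates does work as you indicate; the gap is confined to, but real in, the dominant-output-modulation case.)
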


\begin{proof}
\textbf{(Step 1. Reduction)}
We only prove the proposition for $\|\frac{\pa_x}{\langle\epsilon\partial_x\rangle}(uv)\|_{X_{\ep,+}^{s,-\frac{1}{4}}}$, because the other can be proved by the same way. Indeed, by duality, it suffices to show that 
$$\bigg| \iint \bigg(\frac{\pa_x}{\langle\ep\partial_x\rangle}(uv)\bigg)\bar{w}dxdt\bigg| \lesssim \| u\|_{X_{\ep,+}^{s,b}} \|v\|_{X_{\ep,+}^{s,b}}\|w\|_{X_{\ep,+}^{-s,\frac{1}{4}}}.$$
By Parseval's identity, the left hand side integral can be written as 
$$\frac{1}{(2\pi)^4}\iint\iint\frac{i\xi}{\langle \ep\xi\rangle} \tilde{u}(\tau_1, \xi_1)\tilde{v}(\tau-\tau_1, \xi-\xi_1) \overline{\tilde{w}(\tau,\xi)} d\xi_1 d\tau_1 d\xi d\tau.$$
Hence, by the definition of the Fourier restriction norm, the above trilinear estimate is equivalent to
$$\begin{aligned}
&\bigg| \iint\iint \frac{\xi}{\langle\ep\xi\rangle}\frac{\langle\xi\rangle^s}{\langle\xi_1\rangle^s\langle\xi-\xi_1\rangle^s}\frac{U(\tau_1, \xi_1)V(\tau-\tau_1, \xi-\xi_1)\overline{W(\tau,\xi)}}{\langle \tau_1+s_\ep(\xi_1)\rangle^b\langle \tau-\tau_1+s_\ep(\xi-\xi_1)\rangle^b\langle \tau+s_\ep(\xi)\rangle^{\frac{1}{4}}}d\xi_1d\tau_1d\xi d\tau\bigg| \\ 
&\lesssim \|U\|_{L_{\tau,\xi}^2}\|V\|_{L_{\tau,\xi}^2}\|W\|_{L_{\tau,\xi}^2}.
\end{aligned}$$
For this inequality, one can see that by the Cauchy-Schwarz inequality, the trivial inequality $\langle\xi\rangle\lesssim \langle\xi_1\rangle\langle\xi-\xi_1\rangle$ and Lemma \ref{Lem:tau integral} for the $\tau_1$-integration, the proof of the proposition can be reduced to show that
$$I_\ep(\tau,\xi):=\frac{\xi^2}{\langle\ep\xi\rangle^2\la \tau+s_\ep(\xi)\ra^{\frac{1}{2}}}\int_{\R}\frac{d\xi_1}{\la \tau +s_\epsilon(\xi_1)+s_\epsilon(\xi-\xi_1)\rangle^{2b}}$$
is bounded uniformly in $\tau, \xi$. Here, we may assume that $\xi>0$, since $I_\ep(\tau,-\xi) = I_\ep(-\tau,\xi)$ by the change of variable $\xi\mapsto -\xi$ with $s_\ep(\xi)=-s_\ep(-\xi)$. We also note that the integrand in $I_\ep(\tau,\xi)$ is symmetric with respect to $\xi_1=\frac{\xi}{2}$. Thus, we may write 
\begin{equation}\label{reduced integral estimate}
I_\ep(\tau,\xi)=\frac{2\xi^2}{\langle\ep\xi\rangle^2\la \tau+s_\ep(\xi)\ra^{\frac{1}{2}}}\int_{\frac{\xi}{2}}^\infty\frac{d\xi_1}{\la \tau +s_\epsilon(\xi_1)+s_\epsilon(\xi-\xi_1)\rangle^{2b}}.
\end{equation}

\medskip

\noindent
\textbf{(Step 2. Substitution)}
For the integral in \eqref{reduced integral estimate}, we assume that $\xi_1\geq\frac{\xi}{2}$ and substitute 
$$z=s_\epsilon(\xi_1)+s_\epsilon(\xi-\xi_1)=-\frac{\xi}{\epsilon^2}+\frac{\xi_1}{\ep^2}{\langle \ep\xi_1\rangle}+   \frac{\xi-\xi_1}{\ep^2}{\langle \ep(\xi-\xi_1)\rangle}$$
(see Remark \ref{derivatives of s}). 
Indeed, direct computations, involving rationalization of $\langle \ep\xi_1\rangle-\langle \ep(\xi-\xi_1)\rangle$, yield 
\begin{equation}\label{derivative of z}
\begin{aligned}
z'(\xi_1)&=\frac{\langle \ep\xi_1\rangle}{\ep^2}+\frac{\xi_1^2}{\langle \ep\xi_1\rangle}- \frac{\langle \ep(\xi-\xi_1)\rangle}{\ep^2}-\frac{(\xi-\xi_1)^2}{\langle \ep(\xi-\xi_1)\rangle}\\
&=\frac{\langle \ep\xi_1\rangle-\langle \ep(\xi-\xi_1)\rangle}{\ep^2}+\frac{\xi_1^2-(\xi-\xi_1)^2}{\langle \ep\xi_1\rangle}+(\xi-\xi_1)^2\bigg(\frac{1}{\langle \ep\xi_1\rangle}-\frac{1}{\langle \ep(\xi-\xi_1)\rangle}\bigg)\\
&=\xi(2\xi_1-\xi)\frac{2\langle\ep\xi_1\rangle \langle \ep(\xi-\xi_1)\rangle+1}{\langle\ep\xi_1\rangle \langle \ep(\xi-\xi_1)\rangle \big( \langle\ep\xi_1\rangle + \langle \ep(\xi-\xi_1)\rangle\big)}\geq\frac{\xi(2\xi_1-\xi)}{\langle\ep\xi_1\rangle},
\end{aligned}
\end{equation}
because $\langle\ep(\xi-\xi_1)\rangle \leq\langle \ep\xi_1\rangle$ when $\xi_1\geq\frac{\xi}{2}$. It implies that $z(\xi_1)$ is strictly increasing on $[\frac{\xi}{2},\infty)$ so that we can make such a change of variable. Hence, it follows that 
$$\begin{aligned}
I_\ep(\tau,\xi)&=\frac{2\xi^2}{\langle\ep\xi\rangle^2\la \tau+s_\ep(\xi)\ra^{\frac{1}{2}}}\int_{z(\frac{\xi}{2})}^\infty\frac{dz}{\la \tau+z\ra^{2b}z'(\xi_1)}\\
&\leq \frac{2\xi}{\langle\ep\xi\rangle^2\la \tau+s_\ep(\xi)\ra^{\frac{1}{2}}}\int_{z(\frac{\xi}{2})}^{z(\xi+\frac{1}{\epsilon})}+\int_{z(\xi+\frac{1}{\epsilon})}^\infty\frac{\langle\ep\xi_1\rangle}{\la \tau+z\ra^{2b}(2\xi_1-\xi)}dz\\
&=:I_{\ep}^-(\tau,\xi)+I_{\ep}^+(\tau,\xi).
\end{aligned}$$
\medskip
\noindent
\textbf{(Step 3. Integral estimates)}
It remains to obtain uniform bounds for $I_\epsilon^\pm(\tau,\xi)$ assuming $\xi>0$. For $I_\epsilon^+(\tau,\xi)$, we note that if $\xi_1\geq\xi+\frac{1}{\epsilon}$,
\begin{equation}\label{high frequency integral proof}
2\xi_1-\xi=\xi_1+(\xi_1-\xi)\gtrsim\frac{\langle\epsilon\xi_1\rangle}{\epsilon}.
\end{equation}
Hence, it follows that 
$$I_\ep^+(\tau,\xi)\lesssim\frac{\epsilon\xi}{\langle\ep\xi\rangle^2}\int_{\mathbb{R}}\frac{dz}{\la \tau+z\ra^{2b}}\lesssim 1.$$
To estimate $I_{\ep}^-(\tau,\xi)$, we assume that $0\leq\frac{\xi}{2}\leq\xi_1\leq\xi+\frac{1}{\epsilon}$. Then, $0\leq\epsilon\xi_1\leq 1+\epsilon\xi$ and $-\frac{\epsilon\xi}{2}\leq\epsilon(\xi_1-\xi)\leq 1$. Thus, we have
\begin{equation}\label{low frequency integral proof 1}
\langle\ep\xi_1\rangle + \langle \ep(\xi-\xi_1)\rangle\lesssim \langle\ep\xi\rangle.
\end{equation}
On the other hand, it follows from Taylor's theorem to $z(\xi_1)$ near $\xi_1=\frac{\xi}{2}$ with $z'(\frac{\xi}{2})=0$ (see \eqref{derivative of z}) that there exists $\frac{\xi}{2}<\xi_*<\xi_1$ such that $z(\xi_1)=z(\tfrac{\xi}{2}) +\tfrac12z''(\xi_*)(\xi_1-\tfrac{\xi}{2})^2$ and $z''(\xi_*)>0$, in other words,
\begin{equation}\label{low frequency integral proof 2}
\frac{1}{2\xi_1-\xi}=\frac{(z''(\xi_*))^{\frac{1}{2}}}{2\sqrt{2}(z(\xi_1)-z(\frac{\xi}{2}))^{\frac{1}{2}}}.
\end{equation}
We claim that if $\xi_1>\frac{\xi}{2}$, then 
\begin{equation}\label{low frequency integral proof 3}
z''(\xi_1)\lesssim \frac{\xi}{\langle \ep\xi\rangle}.
\end{equation}
Indeed, by the derivative formulae for $s_\epsilon(\xi)$ (see Remark~\ref{derivatives of s}), $s_\ep''$ is odd and increasing, and $0\leq s_\ep'''\leq3$. Thus, by the mean-value theorem, it follows that 
$$z''(\xi_1)= s_\ep''(\xi_1) + s_\ep''(\xi-\xi_1)=s_\ep''(\xi_1) - s_\ep''(\xi_1-\xi)\leq3\xi$$
and
$$z''(\xi_1)=s_\ep''(\xi_1) + s_\ep''(\xi-\xi_1)\leq 2s_\ep''(\xi_1)=\frac{\xi_1(3+2\ep^2\xi_1^2) }{\langle\epsilon\xi_1\rangle^3} \leq \frac{3}{\ep}.$$
Combining two upper bounds, we prove the claim. Now, we apply \eqref{low frequency integral proof 1}, \eqref{low frequency integral proof 2} and \eqref{low frequency integral proof 3} and Lemma \ref{Lem:tau integral} to $I_{\ep}^-(\tau,\xi)$. Then, it follows that 
$$\begin{aligned}
I_{\ep}^-(\tau,\xi)&\lesssim \frac{\xi^{\frac{3}{2}}}{\langle\ep\xi\rangle^{\frac{3}{2}}\la \tau+s_\ep(\xi)\ra^{\frac{1}{2}}}\int_{\mathbb{R}}\frac{dz}{\la \tau+z\ra^{2b}(z-z(\frac{\xi}{2}))^{\frac{1}{2}}}\\
&\lesssim\frac{\xi^{\frac{3}{2}}}{\langle\ep\xi\rangle^{\frac{3}{2}}\la \tau+s_\ep(\xi)\ra^{\frac{1}{2}}\la \tau+z(\frac{\xi}{2})\ra^{\frac{1}{2}}}\lesssim\frac{\xi^{\frac{3}{2}}}{\langle\ep\xi\rangle^{\frac{3}{2}}\la s_\ep(\xi)-z(\frac{\xi}{2})\ra^{\frac{1}{2}}}\sim1,
\end{aligned}$$
where in the last step, we used that $s_\ep(\xi)-z(\frac{\xi}{2})=\frac{\xi}{\epsilon^2}(\langle\epsilon\xi\rangle-\langle\frac{\epsilon\xi}{2}\rangle)=\frac{3\xi^3}{4(\langle\epsilon\xi\rangle+\langle\frac{\epsilon\xi}{2}\rangle)}\sim\frac{\xi^3}{\langle\epsilon\xi\rangle}$.
\end{proof}

The next two bilinear estimates are for the coupling nonlinearities $u_\ep^\pm\cdot e^{\pm \frac{2t}{\ep^2}\pa_x} u_\ep^\mp$ and $(e^{\pm \frac{2t}{\ep^2}\pa_x} u_\ep^\mp)^2=e^{\pm \frac{2t}{\ep^2}\pa_x}(u_\ep^\mp)^2$ in the coupled Boussinesq system \eqref{Coupled Boussinesq}. Here, we also consider the case when higher regularity norms are present on the right hand side $(s'>s)$ so that we can make use of the estimates to show that the coupling nonlinearities by the regularity gap. 

\begin{lemma}[Bilinear estimate II]\label{Lem:BL2}
If $s'\geq s\geq 0$ and $b>\frac12$, then 
$$\bigg\| \frac{\pa_x}{\langle\epsilon\partial_x\rangle}e^{\pm \frac{2t}{\ep^2}\pa_x}(uv)\bigg\|_{X_{\ep,\pm}^{s,-\frac{1}{4}}} \ls  \ep^{\min\{s'-s,\frac{1}{2}\}}\|u\|_{X_{\ep,\mp}^{s',b}} \|v\|_{X_{\ep,\mp}^{s',b}}.$$
\end{lemma}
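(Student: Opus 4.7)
The plan is to mirror the duality-and-substitution strategy of Lemma~\ref{Lem:BL1}, with the crucial new input that the spatial translation $e^{\pm\frac{2t}{\epsilon^2}\partial_x}$ acts on the space-time Fourier side as the $\tau$-shift $\tilde f(\tau,\xi)\mapsto\tilde f(\tau\mp 2\xi/\epsilon^2,\xi)$, which replaces the gauge phase $\tau\pm s_\epsilon(\xi)$ associated with the output side by $\tau\pm\tilde s_\epsilon(\xi)$, where
\[
\tilde s_\epsilon(\xi):=s_\epsilon(\xi)+\frac{2\xi}{\epsilon^2}=\frac{\xi}{\epsilon^2}\bigl(\langle\epsilon\xi\rangle+1\bigr)
\]
is the ``anti-propagating'' phase. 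Restricting to the $+$ case by symmetry and executing Step~1 of the proof of Lemma~\ref{Lem:BL1} verbatim (Parseval, duality, Cauchy--Schwarz in $(\tau_1,\xi_1)$, and Lemma~\ref{Lem:tau integral} for the $\tau_1$-integral), together with the elementary bound $\langle\xi\rangle^{2s}\lesssim\max\{\langle\xi_1\rangle,\langle\xi-\xi_1\rangle\}^{2s}$ to transfer the Sobolev gap onto the larger-frequency variable, the estimate reduces to the uniform pointwise bound
\[
J_\epsilon(\tau,\xi):=\frac{\xi^2}{\langle\epsilon\xi\rangle^2\langle\tau+\tilde s_\epsilon(\xi)\rangle^{\frac12}}\int_{\xi_1\geq\xi/2}\frac{d\xi_1}{\langle\xi_1\rangle^{2(s'-s)}\langle\xi-\xi_1\rangle^{2s'}\langle\tau-z(\xi_1)\rangle^{2b}}\lesssim\epsilon^{2\min\{s'-s,\frac12\}},
\]
where $z(\xi_1):=s_\epsilon(\xi_1)+s_\epsilon(\xi-\xi_1)$ is the substitution variable of Step~2 of Lemma~\ref{Lem:BL1}.

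The key new ingredient is a non-resonance lower bound for the shifted phase. Setting $f(\eta):=\eta\langle\epsilon\eta\rangle$, which is odd and strictly increasing with $f'(\eta)=(1+2\epsilon^2\eta^2)/\langle\epsilon\eta\rangle>0$, a direct computation yields
\[
\tilde s_\epsilon(\xi)+z(\xi_1)=\frac{1}{\epsilon^2}\bigl(f(\xi)+f(\xi_1)+f(\xi-\xi_1)\bigr),
\]
and the $\xi_1$-minimum of this expression, attained at $\xi_1=\xi/2$ where $f'(\xi_1)=f'(\xi-\xi_1)$, is
\[
|\tilde s_\epsilon(\xi)+z(\xi_1)|\geq\frac{|\xi|(\langle\epsilon\xi\rangle+\langle\epsilon\xi/2\rangle)}{\epsilon^2}\gtrsim\frac{|\xi|(\langle\epsilon\xi\rangle+1)}{\epsilon^2}.
\]
Because $\langle\tau+\tilde s_\epsilon(\xi)\rangle^{1/2}\langle\tau-z(\xi_1)\rangle^{1/2}\gtrsim\langle\tilde s_\epsilon(\xi)+z(\xi_1)\rangle^{1/2}$, this lower bound replaces the KdV-type resonance $\langle s_\epsilon(\xi)-z(\xi/2)\rangle\sim 1+\xi^3/\langle\epsilon\xi\rangle$ that drove Lemma~\ref{Lem:BL1}, and the ratio of the two is of order $\langle\epsilon\xi\rangle^2/(\epsilon^2\xi^2)$, a factor of $\sqrt\epsilon$ larger than $1$ in the low-frequency regime $|\xi|\lesssim 1$.

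I would then execute the change of variables $z=z(\xi_1)$ and split the $\xi_1$-integration into the low branch $\xi/2\le\xi_1\le\xi+1/\epsilon$ (Taylor at $\xi_1=\xi/2$) and the high branch $\xi_1\ge\xi+1/\epsilon$ (using $z'(\xi_1)\gtrsim 1/\epsilon$), exactly as in Steps~2--3 of Lemma~\ref{Lem:BL1}. The improved resonance lower bound converts the low-branch output from $\lesssim 1$ into $\lesssim\epsilon|\xi|/\langle\epsilon\xi\rangle^2$ for $|\xi|\lesssim 1/\epsilon$ and $\lesssim 1/(\epsilon|\xi|)$ for $|\xi|\gtrsim 1/\epsilon$; for $|\xi|\le 1$ this already gives the cap $\epsilon$, while for $|\xi|\ge 1$ one invokes the Sobolev weight $\langle\xi_1\rangle^{-2(s'-s)}\lesssim\langle\xi\rangle^{-2(s'-s)}$ (using $\xi_1\ge\xi/2$) and optimizes in $|\xi|$; the worst case $|\xi|\sim 1/\epsilon$ yields $\epsilon^{2(s'-s)}$ when $s'-s\le\tfrac12$ and $\epsilon$ when $s'-s\ge\tfrac12$. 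On the high branch, the bound $\lesssim 1$ from Step~3 of Lemma~\ref{Lem:BL1} combines with $\langle\xi_1\rangle^{-2(s'-s)}\lesssim\epsilon^{2(s'-s)}$ (since $\xi_1\ge 1/\epsilon$) to produce the same cap.

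I expect the main technical obstacle to be the book-keeping involved in merging the phase gain and the Sobolev gain smoothly across the intermediate regime $1\lesssim|\xi|\lesssim 1/\epsilon$, where neither gain alone is sufficient and one must carefully optimize in $|\xi|$ to see the sharp exponent $2\min\{s'-s,\tfrac12\}$ emerge from the crossover at $|\xi|\sim 1/\epsilon$. A secondary issue is that the Sobolev weights break the $\xi_1\leftrightarrow\xi-\xi_1$ symmetry when $s'>s$, so one must place the larger frequency into $\langle\xi_1\rangle^{-2(s'-s)}$ before restricting to the half-line $\xi_1\ge\xi/2$ on which the substitution $\xi_1\mapsto z$ from Step~2 of Lemma~\ref{Lem:BL1} is a diffeomorphism.
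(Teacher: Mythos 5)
Your proposal is correct and follows essentially the same route as the paper's proof: the same duality reduction, the same substitution $z=s_\epsilon(\xi_1)+s_\epsilon(\xi-\xi_1)$ with the low/high branch split at $\xi_1=\xi+\frac{1}{\epsilon}$, and the same key non-resonance identity (your $\frac{1}{\epsilon^2}(f(\xi)+f(\xi_1)+f(\xi-\xi_1))$ evaluated at $\xi_1=\frac{\xi}{2}$ is exactly the paper's computation $\frac{2\xi}{\epsilon^2}+z(\frac{\xi}{2})+s_\epsilon(\xi)=\frac{\xi}{\epsilon^2}(\langle\frac{\epsilon\xi}{2}\rangle+\langle\epsilon\xi\rangle)$), leading to the identical bound $\frac{\epsilon\xi}{\langle\xi\rangle^{2(s'-s)}\langle\epsilon\xi\rangle^{2}}$ and the same optimization at the crossover $|\xi|\sim\frac{1}{\epsilon}$.
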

   
\begin{proof}
We only prove for $\| \frac{\pa_x}{\langle\epsilon\partial_x\rangle}e^{\frac{2t}{\ep^2}\pa_x}(uv)\|_{X_{\ep,+}^{s,-\frac{1}{4}}}$. Indeed, repeating the reduction step in the proof of Lemma \ref{Lem:BL1} line by line but using that $\frac{\langle\xi\rangle^{2s}}{\langle\xi_1\rangle^{2s'}\langle\xi-\xi_1\rangle^{2s'}}\lesssim\frac{1}{\langle\xi_1\rangle^{2(s-s')}}$ for $\xi_1\geq\frac{\xi}{2}$, one can reduce the proof of the proposition to show that 
$$\sup_{\xi>0,\tau\in\R}\frac{2\xi^2}{\la \ep\xi\ra^2\la \tau + s_\ep(\xi)\ra^{\frac{1}{2}}}\int_{\frac{\xi}{2}}^{\infty}\frac{d\xi_1}{\la \xi_1\ra^{2(s'-s)}\la \tau- \frac{2\xi}{\ep^2}-s_\ep(\xi_1)-s_\ep(\xi-\xi_1)\ra^{2b}}\ls \ep^{\min\{2(s'-s),1\}}.$$
Note that compared to \eqref{reduced integral estimate}, in $\langle\cdots\rangle^{2b}$, the additional term $-\frac{2\xi}{\epsilon}$ comes from the translation operator $e^{\pm \frac{2t}{\ep^2}\pa_x}$, while $s_\epsilon(\xi_1)$ and $s_\epsilon(\xi-\xi_1)$ have different signs, because $X_{\epsilon,-}^{s,b}$-norms are taken on the right hand side. Suppose that $\xi>0$, and substitute 
$$z(\xi_1)=s_\ep(\xi_1)+s_\ep(\xi-\xi_1)$$
as before. Then, it follows from \eqref{derivative of z} that  
$$\begin{aligned}
&\frac{2\xi^2}{\la \ep\xi\ra^2\la \tau + s_\ep(\xi)\ra^{\frac{1}{2}}}\int_{\frac{\xi}{2}}^{\infty}\frac{d\xi_1}{\la \xi_1\ra^{2(s'-s)}\la \tau- \frac{2\xi}{\ep^2}-s_\ep(\xi_1)-s_\ep(\xi-\xi_1)\ra^{2b}}\\
&=\frac{2\xi^2}{\la \ep\xi\ra^2\la \tau + s_\ep(\xi)\ra^{\frac{1}{2}}}\int_{z(\frac{\xi}{2})}^\infty\frac{dz}{\la \xi_1\ra^{2(s'-s)}\la \tau- \frac{2\xi}{\ep^2}-z\ra^{2b}z'(\xi_1)}\\
&\leq \frac{2\xi}{\la \ep\xi\ra^2\la \tau + s_\ep(\xi)\ra^{\frac{1}{2}}}\int_{z(\frac{\xi}{2})}^{z(\xi+\frac{1}{\epsilon})}+\int_{z(\xi+\frac{1}{\epsilon})}^\infty\frac{\langle\ep\xi_1\rangle}{\la \xi_1\ra^{2(s'-s)}\la \tau- \frac{2\xi}{\ep^2}-z\ra^{2b}(2\xi_1-\xi)}dz\\
&=:I_{\ep}^-(\tau,\xi)+I_{\ep}^+(\tau,\xi).
\end{aligned}$$
For $I_{\ep}^+(\tau,\xi)$, it follows from \eqref{high frequency integral proof} that
$$\begin{aligned}
I_{\ep}^+(\tau,\xi)&\lesssim \frac{\epsilon\xi}{\langle\ep\xi\rangle^2\la \tau + s_\ep(\xi)\ra^{\frac{1}{2}}}\int_{z(\xi+\frac{1}{\epsilon})}^\infty\frac{dz}{\la \xi_1\ra^{2(s'-s)}\la \tau- \frac{2\xi}{\ep^2}-z\ra^{2b}}\\
&\lesssim \frac{\epsilon\xi}{\langle\ep\xi\rangle^2\la \xi+\frac{1}{\epsilon}\ra^{2(s'-s)}}\leq\epsilon^{2(s'-s)}\frac{\epsilon\xi}{\langle\ep\xi\rangle^{2(1+s'-s)}}\lesssim \epsilon^{2(s'-s)}.
\end{aligned}$$
For $I_{\ep}^-(\tau,\xi)$, we employ \eqref{low frequency integral proof 1}, \eqref{low frequency integral proof 2} and \eqref{low frequency integral proof 3} and Lemma \ref{Lem:tau integral} to obtain 
$$\begin{aligned}
I_{\ep}^-(\tau,\xi)&\lesssim\frac{\xi^{\frac{3}{2}}}{\la \xi\ra^{2(s'-s)}\la \ep\xi\ra^{\frac{3}{2}}\la \tau + s_\ep(\xi)\ra^{\frac{1}{2}}}\int_{z(\frac{\xi}{2})}^{z(\xi+\frac{1}{\epsilon})}\frac{dz}{\la \tau- \frac{2\xi}{\ep^2}-z\ra^{2b}(z-z(\frac{\xi}{2}))^{\frac{1}{2}}}\\
&\lesssim\frac{\xi^{\frac{3}{2}}}{\la \xi\ra^{2(s'-s)}\la \ep\xi\ra^{\frac{3}{2}}\la \tau + s_\ep(\xi)\ra^{\frac{1}{2}}\la \tau- \frac{2\xi}{\ep^2}-z(\frac{\xi}{2})\rangle^{\frac{1}{2}}}\lesssim\frac{\xi^{\frac{3}{2}}}{\la \xi\ra^{2(s'-s)}\la \ep\xi\ra^{\frac{3}{2}}\la \frac{2\xi}{\ep^2}+z(\frac{\xi}{2}) + s_\ep(\xi)\ra^{\frac{1}{2}}}\\
&\lesssim \frac{\xi^{\frac{3}{2}}}{\la \xi\ra^{2(s'-s)}\la \ep\xi\ra^{\frac{3}{2}}\la \frac{\xi}{\ep^2}\langle\ep\xi\rangle\ra^{\frac{1}{2}}}=\frac{\epsilon\xi}{\langle\xi\rangle^{2(s'-s)}\langle\epsilon\xi\rangle^2},
\end{aligned}$$
where in the last inequality, we used that $\frac{2\xi}{\ep^2}+z(\frac{\xi}{2}) + s_\ep(\xi)=\frac{\xi}{\ep^2}\langle\tfrac{\ep\xi}{2}\rangle+\frac{\xi}{\ep^2}\langle\ep\xi\rangle$. Hence, if $\xi\leq\frac{1}{\epsilon}$, then $I_{\ep}^-(\tau,\xi)\sim\frac{\epsilon\xi}{\langle\xi\rangle^{2(s'-s)}}\leq\epsilon^{\min\{2(s'-s),1\}}$. On the other hand, if $\xi\geq\frac{1}{\epsilon}$, then $I_{\ep}^-(\tau,\xi)\sim \frac{1}{\epsilon\xi^{2(s'-s)+1}}\leq\epsilon^{2(s'-s)}$.
\end{proof}

\begin{lemma}[Bilinear estimate III]\label{Lem:BL3}
If $s'\geq s\geq 0$ and $b>\frac12$, then 
$$\bigg\| \frac{\pa_x}{\langle\epsilon\partial_x\rangle}\Big(u\cdot e^{\pm \frac{2t}{\ep^2}\pa_x}v\Big)\bigg\|_{X_{\ep,\pm}^{s,-\frac{1}{4}}} \ls  \ep^{\min\{s'-s,1\}}\|u\|_{X_{\ep,\pm}^{s',b}} \|v\|_{X_{\ep,\mp}^{s',b}}.$$
\end{lemma}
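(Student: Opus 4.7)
The approach parallels the proofs of Lemmas~\ref{Lem:BL1} and \ref{Lem:BL2}, and by symmetry I only treat the $+$ case. Using the identity $\widetilde{e^{\frac{2t}{\ep^2}\pa_x}v}(\tau,\xi)=\tilde v(\tau-\tfrac{2\xi}{\ep^2},\xi)$, duality on the $X_{\ep,+}^{s,-1/4}$ norm, Parseval, Cauchy--Schwarz, the trivial inequality $\la\xi\ra^{2s}\ls \la\xi_1\ra^{2s}\la\xi-\xi_1\ra^{2s}$, and Lemma~\ref{Lem:tau integral} for the $\tau_1$-integration, the proof reduces to the pointwise bound
\[I_\ep(\tau,\xi):=\frac{\xi^2}{\la\ep\xi\ra^2\la\tau+s_\ep(\xi)\ra^{\frac12}}\int_{\R}\frac{d\xi_1}{\la\xi_1\ra^{2(s'-s)}\la\xi-\xi_1\ra^{2(s'-s)}\la\tau+\Phi(\xi_1)\ra^{2b}}\ls \ep^{\min\{2(s'-s),2\}},\]
uniformly in $(\tau,\xi)\in\R^2$, where
\[\Phi(\xi_1):=s_\ep(\xi_1)-s_\ep(\xi-\xi_1)-\tfrac{2(\xi-\xi_1)}{\ep^2}.\]
The minus sign in front of $s_\ep(\xi-\xi_1)$, in contrast to the plus sign appearing in Lemmas~\ref{Lem:BL1} and~\ref{Lem:BL2}, reflects that $u\in X_{\ep,+}^{s',b}$ and $v\in X_{\ep,-}^{s',b}$ carry opposite phases, and it is this sign that drives everything below.

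\textbf{Phase analysis.} The key structural consequence of this sign change is that, unlike in Lemmas~\ref{Lem:BL1} and \ref{Lem:BL2}, the phase $\Phi$ has \emph{no} critical point on $\R$. Indeed, from Remark~\ref{derivatives of s} and the elementary inequality $s_\ep'(\eta)\ge\tfrac{\la\ep\eta\ra-1}{\ep^2}$,
\[\Phi'(\xi_1)=s_\ep'(\xi_1)+s_\ep'(\xi-\xi_1)+\tfrac{2}{\ep^2}\ge\tfrac{\la\ep\xi_1\ra+\la\ep(\xi-\xi_1)\ra}{\ep^2}\ge \tfrac{2}{\ep^2}.\]
Therefore $\Phi$ is a strictly increasing diffeomorphism of $\R$, and the global substitution $z=\Phi(\xi_1)$ gives
\[\int_{\R}\frac{d\xi_1}{\la\tau+\Phi(\xi_1)\ra^{2b}}\le\tfrac{\ep^2}{2}\int_{\R}\frac{dz}{\la\tau+z\ra^{2b}}\ls \ep^2.\]
This $\ep^2$ gain, coming from the uniform bound $1/\Phi'\le\ep^2/2$, replaces the $\la\tau-z(\tfrac{\xi}{2})\ra^{-1/2}$ stationary-phase factor in Lemmas~\ref{Lem:BL1} and \ref{Lem:BL2}, and is precisely what allows the improved rate $\ep^{\min\{s'-s,1\}}$ compared to $\ep^{\min\{s'-s,1/2\}}$ there.

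\textbf{Frequency case split.} To convert this $\ep^2$ into $\ep^{\min\{2(s'-s),2\}}$ I would use the weights $\la\xi_1\ra^{-2(s'-s)}\la\xi-\xi_1\ra^{-2(s'-s)}$ together with a case analysis. When $|\xi|\ge 1/\ep$, $\tfrac{\xi^2}{\la\ep\xi\ra^2}\sim \tfrac{1}{\ep^2}$ and $\max(|\xi_1|,|\xi-\xi_1|)\ge|\xi|/2\ge 1/(2\ep)$ forces $\la\xi_1\ra^{-2(s'-s)}\la\xi-\xi_1\ra^{-2(s'-s)}\ls \ep^{2(s'-s)}$, which together with the integral bound gives $I_\ep\ls \ep^{2(s'-s)}$. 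When $|\xi|\le 1/\ep$ and still $\max(|\xi_1|,|\xi-\xi_1|)\ge 1/\ep$, the same high-frequency trick yields $I_\ep\ls \xi^2\cdot\ep^{2(s'-s)+2}\ls \ep^{2(s'-s)}$ via $\xi^2\le 1/\ep^2$. In the remaining ``all-low-frequency'' subcase $|\xi|,|\xi_1|,|\xi-\xi_1|\le 1/\ep$, neither frequency weight directly produces a power of $\ep$; the resolution is the Sobolev-type product inequality $\la\xi_1\ra\la\xi-\xi_1\ra\ge\max(\la\xi_1\ra,\la\xi-\xi_1\ra)\gtrsim\la\xi\ra$, which transfers regularity from the convolution to the output and yields
\[I_\ep\ls \frac{\xi^2\ep^2}{\la\xi\ra^{2(s'-s)}}.\]
Separating $|\xi|\le 1$ (giving $\ep^2$) from $1\le|\xi|\le 1/\ep$ (where $\xi^{2-2(s'-s)}\ls\max\{1,\ep^{2(s'-s)-2}\}$ gives $\ep^{\min\{2(s'-s),2\}}$) then closes the estimate in all cases.

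\textbf{Main obstacle.} The technical heart of the argument is this final all-low-frequency subcase: the exterior factor $\la\ep\xi\ra^{-2}$ is of order $1$, neither convolution weight provides any decay on its own, and the absence of a critical point of $\Phi$ leaves no further oscillatory gain beyond the $\ep^2$ already extracted. Only the joint Sobolev-type inequality combined with the pointwise bound $|\xi|\le 1/\ep$ is strong enough to absorb the $\xi^2$ prefactor and produce the claimed $\ep^{\min\{s'-s,1\}}$-rate.
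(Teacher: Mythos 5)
Your proposal is correct and follows essentially the same route as the paper: after the standard duality/Cauchy--Schwarz reduction, both arguments hinge on the observation that the relevant phase $\xi_1\mapsto s_\ep(\xi_1)-s_\ep(\xi-\xi_1)+\frac{2\xi_1}{\ep^2}$ is strictly monotone with derivative $\gtrsim \ep^{-2}$, so the substitution delivers the full $\ep^2$ gain with no stationary-phase loss. The only cosmetic difference is that the paper keeps the sharper lower bound $z'(\xi_1)\ge \langle\frac{\ep\xi}{2}\rangle/\ep^2$ and pulls the frequency weight out as $\langle\xi\rangle^{-2(s'-s)}$ in one step, whereas you compensate for the cruder bound $\Phi'\ge 2/\ep^2$ by a frequency case split; both versions close the estimate.
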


\begin{proof}
We only prove that $\| \frac{\pa_x}{\langle\epsilon\partial_x\rangle}(u\cdot e^{\frac{2t}{\ep^2}\pa_x}v)\|_{X_{\ep,+}^{s,-\frac{1}{4}}} \ls  \ep^{s'-s}\|u\|_{X_{\ep,+}^{s',b}} \|v\|_{X_{\ep,-}^{s',b}},$
which can be reduced as in the proofs of the previous two lemmas to the following integral estimate
$$I_{\tau,\xi}:=\frac{2\xi^2}{\la \xi\ra^{2(s'-s)}\la \ep\xi\ra^2\la \tau + s_\ep(\xi)\ra^{\frac{1}{2}}}\int_{\frac{\xi}{2}}^{\infty}\frac{1}{\la \tau- \frac{2(\xi-\xi_1)}{\ep^2}+s_\ep(\xi_1)-s_\ep(\xi-\xi_1)\ra^{2b}}d\xi_1\ls \ep^{2\min\{s'-s,1\}}$$
for $\xi>0$ and $\tau\in\mathbb{R}$. Indeed, changing the variable for the integral by
$$z(\xi_1) = \frac{2\xi_1-\xi}{\ep^2}+s_\ep(\xi_1)-s_\ep(\xi-\xi_1)=\frac{\xi_1}{\ep^2}\langle\epsilon\xi_1\rangle-\frac{\xi-\xi_1}{\ep^2}\langle\epsilon(\xi-\xi_1)\rangle$$
with
$$z'(\xi_1)=\frac{\langle\epsilon\xi_1\rangle}{\ep^2}+\frac{\langle\epsilon(\xi-\xi_1)\rangle}{\ep^2}+\frac{\xi_1^2}{\langle\epsilon\xi_1\rangle}+\frac{(\xi-\xi_1)^2}{\langle\epsilon(\xi-\xi_1)\rangle}\geq\frac{\langle\frac{\epsilon\xi}{2}\rangle}{\epsilon^2},$$
it follows that
$$I_{\tau,\xi}\lesssim \frac{\xi^2}{\la \xi\ra^{2(s'-s)}\la \ep\xi\ra^2 }\frac{\epsilon^2}{\langle\frac{\epsilon\xi}{2}\rangle}\int_\mathbb{R}\frac{dz}{\la \tau-\frac{\xi}{\epsilon^2}+z\rangle^{2b}}\lesssim \frac{\epsilon^2\xi^2}{\la \xi\ra^{2(s'-s)}\la \ep\xi\ra^3}\ls \ep^{2\min\{s'-s,1\}}.$$
\end{proof}

\section{Uniform bounds for nonlinear solutions}\label{sec: uniform bounds for nonlinear solutions}

We recall the well-known well-posedness result for the KdV equation.

\begin{proposition}[Local well-posedness of the KdV equation \cite{KPV-1993DUKE, KPV1996}]\label{Pro : local sol to KdV}
Let $s>-\frac{3}{4}$. There exists $b\in(\frac12,1)$ such that for $u_0^{\pm}\in H^s$, there exist $T=T(\|u_0^{\pm}\|_{H^s})>0$ and a unique solution $w^{\pm}\in C_t([-T,T]:H^s)$ to the KdV equation \eqref{integral KdV} with an initial data $u_0^{\pm}$ such that $\|w^\pm\|_{X_{\pm}^{s,b}}\lesssim \|u_0^{\pm}\|_{H^s}$.
\end{proposition}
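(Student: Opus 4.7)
The plan is to obtain $w^\pm$ as a fixed point of a Picard iteration in a ball of the Fourier restriction space $X^{s,b}_\pm$ localized to $[-T,T]$, following the classical approach of Bourgain \cite{B-1993KdV} and Kenig--Ponce--Vega \cite{KPV1996}. The crucial analytic input is the KdV bilinear estimate
$$\|\partial_x(w_1w_2)\|_{X^{s,b-1}_\pm}\lesssim\|w_1\|_{X^{s,b}_\pm}\|w_2\|_{X^{s,b}_\pm},$$
which holds for every $s>-\tfrac34$ and some $b\in(\tfrac12,1)$ sufficiently close to $\tfrac12$; in fact, by a small perturbation in the second exponent, one also obtains
$$\|\partial_x(w_1w_2)\|_{X^{s,b-1+\delta}_\pm}\lesssim\|w_1\|_{X^{s,b}_\pm}\|w_2\|_{X^{s,b}_\pm}$$
for all sufficiently small $\delta>0$. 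These are standard and I would quote them as the main analytic input.

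For a parameter $T\in(0,1]$ to be chosen later, I would define the Picard map
$$\Phi^\pm(w)(t):=\eta(t)\,S^\pm(t)u_0^\pm\pm\tfrac12\,\eta(t)\int_0^tS^\pm(t-t_1)\,\partial_x\bigl(w(t_1)\bigr)^2\,dt_1,$$
and seek a fixed point in the closed ball $B_R:=\{w\in X^{s,b}_\pm:\|w\|_{X^{s,b}_\pm}\le R\}$ with $R\sim\|u_0^\pm\|_{H^s}$. Combining the linear-flow bound of Lemma \ref{Lem:Basic property of Xsb}(4), the inhomogeneous Duhamel bound of Lemma \ref{Lem:Basic property of Xsb}(6), the bilinear estimate at level $b-1+\delta$, and the time-localization gain $\|\eta(t/T)F\|_{X^{s,b-1}_\pm}\lesssim T^\delta\|F\|_{X^{s,b-1+\delta}_\pm}$ from Lemma \ref{Lem:Basic property of Xsb}(5) (applicable since $b-1$ and $b-1+\delta$ lie in $(-\tfrac12,\tfrac12)$ for $\delta<\tfrac32-b$), I would derive
$$\|\Phi^\pm(w)\|_{X^{s,b}_\pm}\lesssim\|u_0^\pm\|_{H^s}+T^\delta\|w\|_{X^{s,b}_\pm}^2$$
and the analogous Lipschitz estimate
$$\|\Phi^\pm(w_1)-\Phi^\pm(w_2)\|_{X^{s,b}_\pm}\lesssim T^\delta\bigl(\|w_1\|_{X^{s,b}_\pm}+\|w_2\|_{X^{s,b}_\pm}\bigr)\|w_1-w_2\|_{X^{s,b}_\pm}.$$
Choosing $T=T(\|u_0^\pm\|_{H^s})$ so small that $T^\delta R\ll 1$ turns $\Phi^\pm$ into a contraction on $B_R$, producing a unique fixed point $w^\pm\in B_R$.

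Finally, the embedding $X^{s,b}_\pm\hookrightarrow C_t(\mathbb{R};H^s_x)$ from Lemma \ref{Lem:Basic property of Xsb}(3) (valid since $b>\tfrac12$) yields the continuity $w^\pm\in C_t([-T,T];H^s)$, while the bound $\|w^\pm\|_{X^{s,b}_\pm}\lesssim\|u_0^\pm\|_{H^s}$ is immediate from the contraction radius; uniqueness in $X^{s,b}_\pm$ follows at once from the fixed-point argument. If one were to redo the argument from scratch, the principal obstacle would be the KPV bilinear estimate itself, which rests on the KdV resonance identity $\xi^3-\xi_1^3-(\xi-\xi_1)^3=3\xi\xi_1(\xi-\xi_1)$ together with a careful dyadic Fourier-space analysis; all remaining steps are routine applications of the $X^{s,b}$ calculus summarized in Lemma \ref{Lem:Basic property of Xsb}.
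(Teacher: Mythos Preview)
Your proposal is correct and follows the standard contraction-mapping argument in $X^{s,b}_\pm$ due to Bourgain and Kenig--Ponce--Vega. Note that the paper does not actually supply a proof of this proposition; it is stated as a known result with citations to \cite{KPV-1993DUKE, KPV1996}, and the analogous contraction argument appears only in the proof of Proposition~\ref{Prop : local sol to Boussinesq} for the Boussinesq systems, which matches the structure of your outline.
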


The goal of this section is to establish analogous well-posedness for the coupled/decoupled Boussinesq systems \eqref{Coupled Boussinesq} and \eqref{LFB}. Indeed, it is already shown in Linares \cite{Linares1993} that the original Boussinesq equation \eqref{Boussinesq} is locally well-posed in $H^s$ for $s\ge0$. However, this result or the argument of its proof does not directly provide what we need for the rescaled models, because its existence time $[-T_\epsilon,T_\epsilon]$ from the previous result may depend on the parameter $\epsilon>0$ and there is a possibility that this interval shrinks to zero in the limit. To see this, we note that the nonlinear term $\frac{\pa_x}{\langle\epsilon\partial_x\rangle}(u_\ep^\pm+ e^{\pm \frac{2t}{\ep^2}\pa_x} u_\ep^\mp)^2$ in \eqref{Coupled Boussinesq} includes a differential operator $\frac{\pa_x}{\langle\epsilon\partial_x\rangle}$ of order 0. However, this differential formally converges to $\partial_x$ in the limit $\epsilon\to 0$. Therefore, in order to establish well-posedness with $\epsilon$-uniformity, we need one derivative gain in essence. For this reason, uniform linear/bilinear estimates analogous to those for the KdV equation are prepared in the previous sections.

In this section, we prove the following well-posedness with $\epsilon$-uniformity.

\begin{proposition}[Local well-posedness and uniform bounds for the Boussinesq systems]\label{Prop : local sol to Boussinesq}
Let $s\ge0$ and $\frac12<b<\frac{5}{8}$, and suppose that 
$$\sup_{\epsilon\in(0,1]}\|u_{\epsilon,0}^{\pm}\|_{H^s}\leq R.$$
There exist $T(R)>0$, independent of $\ep\in(0,1]$, and a unique solution $u_\ep^{\pm}(t)$ (resp., $v_\ep^{\pm}(t)$) in $\in C_t([-T,T]; H^s)$ to the coupled Boussinesq equation \eqref{Coupled Boussinesq} (resp., the frequency localized decoupled Boussinesq equation \eqref{LFB}) with an initial data $u_{\epsilon, 0}^{\pm}$ (resp., $P_{\leq N}u_{\epsilon, 0}^\pm$) such that 
$$\sup_{\epsilon\in(0,1]}\|u_{\ep}^\pm\|_{X_{\ep,\pm}^{s,b}} \lesssim R\quad\textup{and}\quad\sup_{\epsilon\in(0,1]}\|v_{\ep}^\pm\|_{X_{\ep,\pm}^{s,b}} \lesssim R.$$
\end{proposition}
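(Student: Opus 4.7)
The plan is to obtain both $u_\ep^\pm$ and $v_\ep^\pm$ by a standard Picard iteration in the Fourier restriction space $X_{\ep,\pm}^{s,b}$, relying on the $\ep$-uniform linear and bilinear machinery of Sections 3 and 4; since all the relevant constants are independent of $\ep$, a single $T = T(R)$ will work uniformly. Fix $b \in (\tfrac12,\tfrac58)$, $T \in (0,1]$, and let $\eta \in C_c^\infty(\R)$ be as in \eqref{bump function}. For the coupled system \eqref{Coupled Boussinesq}, I would work on the product space $Y_\ep^{s,b} := X_{\ep,+}^{s,b} \times X_{\ep,-}^{s,b}$ and define $\Phi_\ep = (\Phi_\ep^+,\Phi_\ep^-)$ by
\begin{equation*}
\Phi_\ep^\pm(u^+,u^-)(t) := \eta(t/T) S_\ep^\pm(t) u_{\ep,0}^\pm \pm \tfrac{1}{2}\eta(t/T) \int_0^t S_\ep^\pm(t-t_1) \tfrac{\pa_x}{\la\ep\pa_x\ra} \eta(t_1/T)^2 \mathcal{N}_\pm(u^+,u^-)(t_1)\, dt_1,
\end{equation*}
where $\mathcal{N}_\pm(u^+,u^-) = (u^\pm)^2 + 2 u^\pm \cdot e^{\pm \frac{2t}{\ep^2}\pa_x} u^\mp + e^{\pm \frac{2t}{\ep^2}\pa_x}(u^\mp)^2$. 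Since $\eta(t_1/T) \equiv 1$ on $|t_1| \leq T$, any fixed point of $\Phi_\ep$ in $Y_\ep^{s,b}$ restricts to a genuine solution of \eqref{Coupled Boussinesq} on $[-T,T]$.

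\textbf{Combining linear and bilinear estimates.} For the homogeneous piece, Lemma \ref{Lem:Basic property of Xsb}(4) gives $\|\eta(t/T)S_\ep^\pm(t) u_{\ep,0}^\pm\|_{X_{\ep,\pm}^{s,b}} \lesssim T^{1/2-b} R$. For the Duhamel piece, Lemma \ref{Lem:Basic property of Xsb}(6) contributes a factor $T^{1/2-b}$ with the nonlinearity absorbed into $X_{\ep,\pm}^{s,b-1}$; applying then Lemma \ref{Lem:Basic property of Xsb}(5) with source index $-\tfrac14$ and target index $b-1$ (admissible exactly when $\tfrac12 < b \leq \tfrac34$) brings in an extra factor $T^{3/4-b}$ and leaves the nonlinearity measured in $X_{\ep,\pm}^{s,-1/4}$. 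The total shrinkage $T^{5/4-2b}$ is strictly positive precisely under the assumption $b < \tfrac58$. Now Lemma \ref{Lem:BL1} handles $(u^\pm)^2$, and Lemmas \ref{Lem:BL2} and \ref{Lem:BL3}, applied at $s' = s$ (so that the $\ep$-power becomes $\ep^0 = 1$), handle $e^{\pm 2t\pa_x/\ep^2}(u^\mp)^2$ and $u^\pm \cdot e^{\pm 2t\pa_x/\ep^2} u^\mp$, all with constants independent of $\ep$. Summing,
\begin{equation*}
\|\Phi_\ep(u^+,u^-)\|_{Y_\ep^{s,b}} \leq C R + C\, T^{5/4-2b} \|(u^+,u^-)\|_{Y_\ep^{s,b}}^2,
\end{equation*}
and the corresponding Lipschitz estimate on $\Phi_\ep(u_1) - \Phi_\ep(u_2)$ is obtained by polarizing the bilinear bounds.

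\textbf{Contraction and the frequency-localized system.} On the closed ball of radius $2CR$ in $Y_\ep^{s,b}$, choosing $T = T(R) > 0$ small enough (independent of $\ep$) so that $C T^{5/4-2b}(2CR)^2 \leq CR$ and $2CT^{5/4-2b}(2CR) < 1$, the map $\Phi_\ep$ is a contractive self-map, producing a unique fixed point $(u_\ep^+,u_\ep^-)$ with $\|u_\ep^\pm\|_{X_{\ep,\pm}^{s,b}} \lesssim R$; continuity in time $C_t([-T,T]; H^s)$ is the embedding Lemma \ref{Lem:Basic property of Xsb}(3). For the frequency-localized decoupled system \eqref{LFB}, the absence of coupling reduces the estimate to a single application of Lemma \ref{Lem:BL1} (the sharp truncation $P_{\leq N}$ is bounded on $X_{\ep,\pm}^{s,b}$ by definition), and an identical contraction applied to each sign independently delivers $v_\ep^\pm$ with the claimed uniform bound.

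\textbf{Main obstacle.} The essential delicacy is the fast-oscillating cross term $e^{\pm 2t\pa_x/\ep^2} u_\ep^\mp$: its diverging group velocity a priori threatens $\ep$-uniform control, and it is for this reason that the bilinear estimates Lemmas \ref{Lem:BL2} and \ref{Lem:BL3} are engineered. Even at $s' = s$, where no quantitative $\ep$-gain is produced, they return $\ep$-independent constants, which is exactly enough to close the contraction with $T$ depending only on $R$ and not on $\ep$.
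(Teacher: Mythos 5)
Your proposal is correct and follows essentially the same route as the paper: a contraction in the product space $X_{\ep,+}^{s,b}\times X_{\ep,-}^{s,b}$ for the time-truncated Duhamel map, with the $T^{\frac12-b}$ and $T^{\frac34-b}$ gains from Lemma \ref{Lem:Basic property of Xsb}(5)--(6) combining to $T^{\frac54-2b}$ (hence $b<\frac58$), and the $\ep$-uniform bilinear estimates of Lemmas \ref{Lem:BL1}--\ref{Lem:BL3} (the latter two at $s'=s$) closing the fixed-point argument with $T=T(R)$ independent of $\ep$. The treatment of the frequency-localized decoupled system as the same argument with fewer terms also matches the paper.
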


\begin{proof}
We only give a proof for the coupled Boussinesq equation \eqref{Coupled Boussinesq}. Indeed, the proof for the other equation \eqref{Coupled Boussinesq} follows by the same way, because it has the same structure with less nonlinear terms and the frequency truncation $P_{\leq N}$ only makes terms smaller. 

For notational convenience, we denote $\mathbf{X}_{\ep}^{s,b}:=X_{\ep,+}^{s,b}\times X_{\ep,-}^{s,b}$ and $\mathbf{H}^{s}=H^{s}\times H^{s}$. Fix initial $\mathbf{u}_{\epsilon,0}=(u_{\epsilon,0}^{+},u_{\epsilon,0}^{-})$ such that $\|\mathbf{u}_{\epsilon,0}\|_{\mathbf{H}^s}\leq 2R$, and let $T>0$ be a small number to be chosen later. For $\mathbf{u}=(u^+,u^-)\in \mathbf{X}_{\ep}^{s,b}$, we define 
$$\mathbf{\Phi}_\epsilon(\mathbf{u})=(\Phi_\epsilon^+(\mathbf{u}), \Phi_\epsilon^-(\mathbf{u}))$$
by 
$$\Phi_\epsilon^\pm(\mathbf{u}) = \eta_T(t)S_\ep^{\pm}(t) u_{\epsilon,0}^{\pm} \pm \frac{\eta_T(t)}{2}\int_0^tS_\ep^{\pm}(t-t_1)\eta_T(t_1)\frac{\pa_x}{\langle\epsilon\partial_x\rangle}\Big\{ \big(u_\ep^\pm (t_1)+ e^{\pm \frac{2t_1}{\ep^2}\pa_x} u_\ep^\mp (t_1) \big)^2  \Big\} dt_1,$$
where $\eta$ is a smooth bump function satisfying  \eqref{bump function} and $\eta_T=\eta(\frac{\cdot}{T})$. Then, it follows from Lemma~\ref{Lem:Basic property of Xsb} that 
\begin{equation}\label{contraction}
\begin{aligned}
\|\Phi_\epsilon^\pm(\mathbf{u})\|_{X_{\ep,\pm}^{s,b}}
   &\lesssim \|u_{\epsilon,0}^{\pm}\|_{H^s} + T^{\frac{1}{2}-b}\bigg\|\eta_{T}\frac{\pa_x}{\langle\epsilon\partial_x\rangle}\Big\{u^\pm+ e^{\pm \frac{2t}{\ep^2}\pa_x} u^\mp \Big\}^2\bigg\|_{X_{\pm,\ep}^{s,-(1-b)}}\\ 
   &\lesssim R+ T^{\frac{5}{4}-2b}\bigg\|\frac{\pa_x}{\langle\epsilon\partial_x\rangle}\Big\{u^\pm+ e^{\pm \frac{2t}{\ep^2}\pa_x} u^\mp \Big\}^2\bigg\|_{X_{\pm,\ep}^{s,-\frac{1}{4}}}
\end{aligned}
\end{equation}
Then, applying the bilinear estimates (Lemma \ref{Lem:BL1}, \ref{Lem:BL2} and \ref{Lem:BL3}) to the nonlinear term
$$\frac{\pa_x}{\langle\epsilon\partial_x\rangle}\Big\{u^\pm+ e^{\pm \frac{2t}{\ep^2}\pa_x} u^\mp \Big\}^2=\frac{\pa_x}{\langle\epsilon\partial_x\rangle}\Big\{(u^\pm)^2+ 2u^\pm\cdot e^{\pm \frac{2t}{\ep^2}\pa_x} u^\mp+e^{\pm \frac{2t}{\ep^2}\pa_x}(u^\mp)^2 \Big\}^2,$$
we obtain 
$$\|\mathbf{\Phi}_\epsilon(\mathbf{u})\|_{\mathbf{X}_{\ep}^{s,b}}\le cR + cT^{\frac{5}{4}-2b}\|\mathbf{u}\|_{\mathbf{X}_{\ep}^{s,b}}^2,$$
where $c>0$ is independent of $\ep\in(0,1]$. Similarly, for the difference, one can show that 
$$\|\mathbf{\Phi}_\epsilon(\mathbf{u}_1)-\mathbf{\Phi}_\epsilon(\mathbf{u}_2)\|_{\mathbf{X}_{\ep}^{s,b}}\le cT^{\frac{5}{4}-2b}\big(\|\mathbf{u}_1\|_{\mathbf{X}_{\ep}^{s,b}}+\|\mathbf{u}_2\|_{\mathbf{X}_{\ep}^{s,b}}\big)\|\mathbf{u}_1-\mathbf{u}_2\|_{\mathbf{X}_{\ep}^{s,b}}.$$
Therefore, taking $T=(8cR)^{-1/(\frac{5}{4}-2b)}$, we conclude that $\mathbf{\Phi}_\epsilon(\mathbf{u})$ is contractive on
$\{\mathbf{u}\in \mathbf{X}_{\ep}^{s,b} : \|\mathbf{u}\|_{\mathbf{X}_\ep^{s,b}} \le 2cR\}$, and thus the equation \eqref{Coupled Boussinesq} has a unique solution $u_\ep^{\pm}(t)$ with initial data $u_{\epsilon,0}^{\pm}$ such that $\|u_\ep^{\pm}\|_{X_{\ep,\pm}^{s,b}} \le 2cR$.
\end{proof}

\section{Proof of the main result}\label{sec: proof of the main result}

We are ready to prove the main theorem (Theorem \ref{main theorem'}), which is equivalent to Theorem \ref{main theorem}. For the proof, fix $0<s\leq 5$ and $b\in(\frac{1}{2}, \frac{5}{8})$ such that Proposition \ref{Pro : local sol to KdV} and \ref{Prop : local sol to Boussinesq} hold. Given $R>0$, we assume that $\sup_{\epsilon\in(0,1]}\|u_{\epsilon,0}^{\pm}\|_{H^s}\leq R$, and let $u_{\ep}^{\pm}(t)$ and $w_{\ep}^{\pm}(t)$ be the solutions to the coupled Boussinesq system \eqref{Boussinesq phase function} and the KdV equation \eqref{integral KdV}  with initial data $u_{\epsilon, 0}^\pm$, respectively, constructed in Proposition \ref{Prop : local sol to Boussinesq} and \ref{Pro : local sol to KdV}, and let $v_{\ep}^{\pm}(t)$ be the solution to the frequency localized decoupled Boussinesq equation \eqref{LFB} with initial data $P_{\leq N}u_{\epsilon, 0}^\pm$, given in Proposition \ref{Prop : local sol to Boussinesq}. Here, we may assume that $u_{\ep}^{\pm}(t)$, $v_{\ep}^{\pm}(t)$ and $w_{\ep}^{\pm}(t)$ all exist on the same time interval $[-T,T]$, where 
\begin{equation}\label{T choice}
T=\min\bigg\{\Big(\frac{c_0}{R}\Big)^{1/(\frac{5}{4}-2b)},1\bigg\}
\end{equation}
for some sufficiently small $c_0>0$ to be chosen later, and that they satisfy the uniform bounds, 
\begin{equation}\label{all solution uniform bounds}
\|u_{\ep}^{\pm}\|_{X_{\ep,\pm}^{0,b}}, \|v_{\ep}^{\pm}\|_{X_{\ep,\pm}^{0,b}}, \|w_{\ep}^{\pm}\|_{X_{\pm}^{0,b}}\lesssim R.
\end{equation}
Then, by the embedding for the Fourier restriction norm (Lemma \ref{Lem:Basic property of Xsb} $(3)$), it suffices to show that
$$\|u_{\ep}^{\pm}-v_{\ep}^{\pm}\|_{X_{\ep,\pm}^{0,b}}, \|v_{\ep}^{\pm}-w_{\ep}^{\pm}\|_{X_{\pm}^{0,b}}\lesssim_R \epsilon^{\min\{\frac{2s}{5},\frac{1}{2}\}}.$$

\subsection{Estimate for the difference $(u_{\ep}^{\pm}-v_{\ep}^{\pm})$}
Putting the temporal cut-off $\eta_T=\eta(\frac{\cdot}{T})$, where $\eta$ is a smooth bump function satisfying \eqref{bump function}, into \eqref{difference1}, we write the difference $u_{\ep}^{\pm}-v_{\ep}^{\pm}$ as 
$$\begin{aligned}
(u_{\ep}^{\pm}-v_{\ep}^{\pm})(t)&=\eta_T(t)S_\ep^{\pm}(t)(1-P_{\leq N})u_{\epsilon,0}^{\pm}\\
&\quad\pm \frac{\eta_T(t)}{2}\int_0^tS_\ep^{\pm}(t-t_1)\eta_{T}(t_1)\frac{\pa_x}{\langle\epsilon\partial_x\rangle}\big((u_\ep^\pm+v_\ep^\pm)(u_\ep^\pm-v_\ep^\pm)\big)(t_1)dt_1\\
&\quad\pm \frac{\eta_T(t)}{2}\int_0^tS_\ep^{\pm}(t-t_1)\eta_{T}(t_1)\frac{\pa_x}{\langle\epsilon\partial_x\rangle}\big( 2u_\ep^\pm e^{\pm \frac{2t_1}{\ep^2}\pa_x}u_\ep^\mp+ e^{\pm \frac{2t_1}{\ep^2}\pa_x} (u_\ep^\mp)^2\big) (t_1)dt_1.
\end{aligned}$$
Then, applying the linear estimates in the Fourier restriction norms (Lemma \ref{Lem:Basic property of Xsb}), we obtain  
$$\begin{aligned}
\|u_{\ep}^{\pm}-v_{\ep}^{\pm}\|_{X_{\epsilon, \pm}^{0,b}}&\lesssim T^{\frac{1}{2}-b}\|(1-P_{\leq N})u_{\epsilon,0}^{\pm}\|_{L^2}+ T^{\frac{1}{2}-b}\bigg\|\eta_{T}(t)\frac{\pa_x}{\langle\epsilon\partial_x\rangle}\big((u_\ep^\pm+v_\ep^\pm)(u_\ep^\pm-v_\ep^\pm)\big)\bigg\|_{X_{\epsilon, \pm}^{0,-(1-b)}}\\
&\quad + T^{\frac{1}{2}-b}\bigg\|\eta_{T}(t)\frac{\pa_x}{\langle\epsilon\partial_x\rangle}\Big( 2u_\ep^\pm e^{\pm \frac{2t_1}{\ep^2}\pa_x}u_\ep^\mp+ e^{\pm \frac{2t_1}{\ep^2}\pa_x} (u_\ep^\mp)^2\Big) \bigg\|_{X_{\epsilon, \pm}^{0,-(1-b)}}\\
&\lesssim T^{\frac{1}{2}-b}N^{-s}\|u_{\epsilon,0}^{\pm}\|_{H^s}+ T^{\frac{5}{4}-2b}\bigg\|\frac{\pa_x}{\langle\epsilon\partial_x\rangle}\big((u_\ep^\pm+v_\ep^\pm)(u_\ep^\pm-v_\ep^\pm)\big)\bigg\|_{X_{\epsilon, \pm}^{0,-\frac{1}{4}}}\\
&\quad + T^{\frac{5}{4}-2b}\bigg\|\frac{\pa_x}{\langle\epsilon\partial_x\rangle}\big(u_\ep^\pm e^{\pm \frac{2t_1}{\ep^2}\pa_x}u_\ep^\mp\big) \bigg\|_{X_{\epsilon, \pm}^{0,-\frac{1}{4}}}+ T^{\frac{5}{4}-2b}\bigg\|\frac{\pa_x}{\langle\epsilon\partial_x\rangle}e^{\pm \frac{2t_1}{\ep^2}\pa_x} \big((u_\ep^\mp)^2\big)\bigg\|_{X_{\epsilon, \pm}^{0,-\frac{1}{4}}}.
\end{aligned}$$
Hence, it follows from the bilinear estimates (Lemma \ref{Lem:BL1}, \ref{Lem:BL2} and \ref{Lem:BL3}) and the uniform bounds \eqref{all solution uniform bounds} with $N=\frac{1}{2}\epsilon^{-\frac{2}{5}}$ and the choice of  $T$ (see \eqref{T choice}) that 
$$\begin{aligned}
\|u_{\ep}^{\pm}-v_{\ep}^{\pm}\|_{X_{\epsilon, \pm}^{0,b}}&\lesssim N^{-s}T^{\frac{1}{2}-b}\|u_{\epsilon,0}^{\pm}\|_{H^s}+ T^{\frac{5}{4}-2b}\big(\|u_{\ep}^{\pm}\|_{X_{\epsilon, \pm}^{0,b}}+\|v_{\ep}^{\pm}\|_{X_{\epsilon, \pm}^{0,b}}\big)\|u_{\ep}^{\pm}-v_{\ep}^{\pm}\|_{X_{\epsilon, \pm}^{0,b}}\\
&\quad + \epsilon^{\min\{s,\frac{1}{2}\}}T^{\frac{5}{4}-2b}\|u_{\ep}^{\pm}\|_{X_{\epsilon, \pm}^{s,b}}\|u_{\ep}^{\mp}\|_{X_{\epsilon, \mp}^{s,b}}+ \epsilon^{\min\{s,1\}}T^{\frac{5}{4}-2b}\|u_{\ep}^{\pm}\|_{X_{\epsilon, \pm}^{s,b}}^2\\
&\lesssim \epsilon^{\frac{2s}{5}}T^{\frac{1}{2}-b}R+ \frac{c_0}{R}\cdot 2R\cdot\|u_{\ep}^{\pm}-v_{\ep}^{\pm}\|_{X_{\epsilon, \pm}^{0,b}}+\epsilon^{\min\{s,\frac{1}{2}\}}\cdot\frac{c_0}{R}\cdot R^2.
\end{aligned}$$
Since $c_0>0$ is small, this proves $\|u_{\ep}^{\pm}-v_{\ep}^{\pm}\|_{X_{\epsilon, \pm}^{0,b}}\lesssim_R \epsilon^{\min\{\frac{2s}{5},\frac{1}{2}\}}$.

\subsection{Estimate for the difference $(v_{\ep}^{\pm}-w_{\ep}^{\pm})$}
We consider the difference \eqref{difference2} of the form 
$$\begin{aligned}
(v_{\ep}^{\pm}-w_{\ep}^{\pm})(t)&=\eta_T(t)(1-P_{\leq N})w_{\ep}^{\pm}(t)+\eta_T(t)(S_\ep^{\pm}(t)-S^{\pm}(t))P_{\leq N}u_{\epsilon,0}^{\pm}\\
&\quad\pm \frac{\eta_T(t)}{2}\int_0^tS_\ep^{\pm}(t-t_1)\eta_{T}(t_1)\frac{\pa_x}{\langle\epsilon\partial_x\rangle}P_{\leq N}\big((v_\ep^\pm+w_\ep^\pm)(v_\ep^\pm-w_\ep^\pm)(t_1)\big)dt_1\\
&\quad\pm \frac{\eta_T(t)}{2}\int_0^t(S_\ep^{\pm}(t-t_1)-S^{\pm}(t-t_1))\eta_{T}(t_1)\frac{\pa_x}{\langle\epsilon\partial_x\rangle}P_{\leq N}(w_\epsilon^{\pm}(t_1))^2dt_1\\
&\quad\mp \frac{\eta_T(t)}{2}\int_0^tS^{\pm}(t-t_1)\eta_{T}(t_1)\partial_x\bigg( 1 - \frac{1 }{\langle\epsilon\partial_x\rangle} \bigg)P_{\leq N}(w_\epsilon^{\pm}(t_1))^2dt_1.
\end{aligned}$$
Then, as before, applying the linear estimates (Lemma \ref{Lem:difference linear sol in Xsb} to the second and the fourth terms containing the difference of two propagators, and Lemma \ref{Lem:Basic property of Xsb} to the other terms), we obtain 
$$\begin{aligned}
\|v_{\ep}^{\pm}-w_{\ep}^{\pm}\|_{X_{\pm}^{0,b}}&\lesssim N^{-s}\|w_{\ep}^{\pm}\|_{X_{\pm}^{s,b}}+\ep^{\frac{2s}{5}}T^{\frac{3}{2}-b}\|u_{\epsilon,0}^{\pm}\|_{H^s}+T^{\frac{5}{4}-2b}\big\|\pa_x\big((v_\ep^\pm+w_\ep^\pm)(v_\ep^\pm-w_\ep^\pm)\big)\big\|_{X_{\pm}^{0,-\frac{1}{4}}}\\
&+\ep^{\frac{2s}{5}}T^{\frac{9}{4}-b}\|\pa_x(w_\epsilon^{\pm})^2\|_{X_{\pm}^{s,-\frac{1}{4}}}+\epsilon^{\frac25s} T^{\frac{5}{4}-2b}\|\pa_x(w_\epsilon^{\pm})^2\|_{X_{\pm}^{s,-\frac{1}{4}}},
\end{aligned}$$
where for the last term, we used that $1-\frac{1}{\langle\epsilon\xi\rangle}=\frac{\epsilon^2\xi^2}{\langle\epsilon\xi\rangle(1+\langle\epsilon\xi\rangle)}\lesssim \epsilon^{\frac25s}|\xi|^{\frac25s}\lesssim \epsilon^{\frac25s}\langle \xi\rangle^{s}$ when $|\xi|\leq \frac12\ep^{-\frac25}$. Then, the bilinear estimate \eqref{bilinear KdV} for the Airy flows yields 
$$\begin{aligned}
\|v_{\ep}^{\pm}-w_{\ep}^{\pm}\|_{X_{\pm}^{0,b}}&\lesssim \epsilon^{\frac{2s}{5}}\|w_{\ep}^{\pm}\|_{X_{\pm}^{s,b}}+\ep^{\frac{2s}{5}}T^{\frac{3}{2}-b}\|u_{\epsilon,0}^{\pm}\|_{H^s}+T^{\frac{5}{4}-2b}\big(\|v_{\ep}^{\pm}\|_{X_{\pm}^{0,b}}+\|w_{\ep}^{\pm}\|_{X_{\pm}^{0,b}}\big)\|v_{\ep}^{\pm}-w_{\ep}^{\pm}\|_{X_{\pm}^{0,b}}\\
&+\ep^{\frac{2s}{5}}T^{\frac{9}{4}-b}\|w_{\ep}^{\pm}\|_{X_{\pm}^{s,b}}^2+\epsilon^{\frac25s} T^{\frac{5}{4}-2b}\|w_{\ep}^{\pm}\|_{X_{\pm}^{s,b}}^2.
\end{aligned}$$
Note that by construction, the Fourier transform of $v_\epsilon^\pm$ is supported in $|\xi|\leq N$. Hence, by Lemma \ref{Lem:frequency loc norm}, $\|v_{\ep}^{\pm}\|_{X_{\pm}^{s,b}}\sim \|v_{\ep}^{\pm}\|_{X_{\epsilon, \pm}^{s,b}}\lesssim R$. Thus, it follows from the uniform bounds \eqref{all solution uniform bounds} that 
$$\|v_{\ep}^{\pm}-w_{\ep}^{\pm}\|_{X_{\pm}^{0,b}}\lesssim \epsilon^{\frac{2s}{5}}R+\ep^{\frac{2s}{5}}T^{\frac{3}{2}-b}R+T^{\frac{5}{4}-2b}R\|v_{\ep}^{\pm}-w_{\ep}^{\pm}\|_{X_{\pm}^{0,b}}+\ep^{\frac{2s}{5}}T^{\frac{9}{4}-b}R^2+\epsilon^{\frac25s} T^{\frac{5}{4}-2b}R^2.$$
Therefore, by the choice of $T$ in \eqref{T choice}, we conclude that  $\|v_{\ep}^{\pm}-w_{\ep}^{\pm}\|_{X_{\pm}^{0,b}}\lesssim_R \epsilon^{\frac{2s}{5}}$.

\end{document}